\documentclass[11pt]{article}
\usepackage{setspace}
\usepackage{mathrsfs}
\usepackage{mathrsfs}
\usepackage{mathrsfs}
\usepackage{graphicx}
\usepackage{epstopdf}
\usepackage{multirow}
\usepackage{cite}
\usepackage{enumitem}
\usepackage{authblk}
\textwidth 160mm
\textheight 235mm \oddsidemargin 0.3cm
\evensidemargin 0.3cm
\topmargin -1cm

\headsep=0.8cm
\usepackage{amsfonts}
\usepackage{amssymb}
\usepackage{amsmath}
\usepackage{amsthm}
\usepackage[notref,notcite]{showkeys}
\usepackage[colorlinks=true]{hyperref}
\usepackage{hyperref}
\hypersetup{linkcolor=blue}
\allowdisplaybreaks[4]
\theoremstyle{plain}

\numberwithin{equation}{section}
\newtheorem{theorem}{Theorem}[section]
\newtheorem{lemma}[theorem]{Lemma}
\newtheorem{proposition}[theorem]{Proposition}

\theoremstyle{definition}
\newtheorem{defn}{Definition}[section]

\theoremstyle{remark}
\newtheorem{remk}{Remark}[section]

\begin{document}

\title{{\Large\bf{Normalized solutions of quasilinear Schrödinger equation with Sobolev critical exponent on star-shaped bounded domains}}}

\author{Ru Yan$^{\mathrm{a,b}}$ \\
{\small $^{\mathrm{a}}$Institute of Mathematics, AMSS, Chinese Academy of Science,
Beijing 100190, China}\\
{\small $^{\mathrm{b}}$University of Chinese Academy of Science,
Beijing 100049, China}\\
}
\date{}
\maketitle

\begin{minipage}{14cm} {\bf Abstract:}
	In this paper, we consider a quasilinear Schrödinger equation with critical exponent on bounded domains. Via a dual approach, we establish the existence of two positive normalized solutions: one is a ground state and the other is a mountain pass solution.     
    \\
    \noindent{\it Keywords:} Normalized solutions; quasilinear Schrödinger equation; Sobolev critical growth; dual approach.\\
\end{minipage}

\section{Introduction}
In this paper, we consider the following quasilinear Schrödinger equation:
\begin{equation}\label{5aa}
    \begin{cases}
        -\Delta u - u\Delta (u^2) - |u|^{2(2^*)-2}u = \lambda u, & x \in \Omega, \\
        u > 0 \ \text{in}\ \Omega, \ u = 0 \ \text{on}\ \partial \Omega, \ \int_{\Omega} |u|^2 \, \mathrm{d}x = c.
    \end{cases}
\end{equation}
where $c>0,\ \Omega\subset \mathbb{R}^N(N\ge 3)$ is smooth, bounded, star-shaped and $2^*=\frac{2N}{N-2}$. $2\cdot 2^*$ was known to be a critical exponent for equation \eqref{5aa}, which is first observed in \cite{wzq2004}. The solutions of \eqref{5aa} give rise to standing waves of the time-dependent quasilinear Schrödinger equation for the following:
\begin{equation}
    \begin{cases}
        i\partial_t \varphi + \Delta \varphi + \varphi \Delta (|\varphi|^2) + |\varphi|^{p-1} \varphi = 0, & \text{in } \mathbb{R} \times \Omega,\\
        \varphi(0,x) = \varphi_0(x), & \text{in } \Omega.
    \end{cases}
\end{equation}
Equations of this type arise in various branches of physics, such as plasma physics, and fluid mechanics. For further details on the relevant physical background, we refer the reader to references \cite{jean2010,wzq2002} and the literature cited therein. 

When $\lambda$ appears as a fixed parameter, the solutions of \eqref{5aa} correspond to the critical points of the energy functional given by 
$$
E_\lambda (u)=\frac{1}{2}\int_{\Omega}|\nabla u|^2 d x+\frac{\lambda}{2}\int_{\Omega}|u|^2 d x+\int_{\Omega}u^2|\nabla u|^2 dx-\frac{1}{2\cdot2^*}\int_{\Omega}|u|^{2(2^*)}dx
$$
which is defined on the natural space 
$$
\mathcal{X}:= \left\{ u \in H_0^1(\Omega) : \int_{\Omega} u^2|\nabla u|^2 \, dx < \infty \right\}.
$$
Since the term $u\Delta (u^2)$ causes $E_\lambda$ to be non-differentiable in $\mathcal{X}$, the search for solutions of \eqref{5aa} is considerably more difficult than in the semilinear case. In order to overcome this difficulty, several variational techniques have been developed, such as minimization methods \cite{wzq2003,wzq2002,Ruiz2010}, the dual approach \cite{jean2004,wzq2003-2}, the Nehari manifold approach \cite{wzq2004},  and
 perturbation approaches \cite{wzq2014,wzq2013}.

 Due to the fact that normalized solutions carry stronger physical significance, considerable research has been conducted on normalized solutions of quasilinear equations. We refer the reader to \cite{wzq2023, Zou2023, Yang2025, zxx2024, chang2025, Gyx2024, Gyx2025,jean2015} and the references therein for more information. To the best of our knowledge, no one has considered normalized solutions on bounded domains for the quasilinear Schrödinger equation. Compared with the entire space, tools such as dilations \cite{jean1997} and the Phonzaev mafold \cite{Bartch2017} are no longer applicable on bounded domains. In recent years, there have been new advances to normalized solutions of the semilinear Schrödinger equation\cite{Pierotti2017, Noris2019, Noris2014, chang20252, Pellacci2021, Bartsch2024, Lyy2025, song2024, song2023}. Inspired by \cite{song2024}, we investigate normalized solutions of \eqref{5aa}. 
 
 Firstly, we prove the existence of a normalized ground state in the following sense.
 \begin{defn}
    Let 
    \begin{align*}
    \tilde{S}_c:=\{u \in \mathcal{X} \text { s.t. } \int_{\Omega}\left|u\right|^2 d x=c\}.
    \end{align*}
    We say that $u\in \tilde{S}_c$ is a normalized ground state of \eqref{5aa}, if it is a solution that has minimal energy among all solutions belonging to $\tilde{S}_c$. Namely, if 
    $$
    \tilde{E} (u_c)=\inf \left \{ \tilde{E} (u):u\in \tilde{S}_c,\left(\tilde{E} |_{S_c}\right)^{\prime}\left(u\right)=0\right \}, 
    $$
    where
    $$
    \tilde{E}(u)=\frac{1}{2}\int_{\Omega}|\nabla u|^2 d x+\int_{\Omega}u^2|\nabla u|^2 dx-\frac{1}{2\cdot 2^*}\int_{\Omega}|u|^{2(2^*)}dx.
    $$
\end{defn}

\begin{theorem}\label{A}
    Let $\Omega$ be smooth, bounded, and star-shaped with respect to the origin, let $\mathcal{S}$ be the Sobolev best constant, and let $\lambda_1(\Omega)$ denote the first eigenvalue of $-\Delta$ on $\Omega$ with Dirichlet boundary condition. Then there exists $c_0>0$ such that for $0<c<c_0$, \eqref{5aa} has a normalized ground state $u_c$ with Lagrange multiplier $\lambda_c$ satisfying $\lambda_c < \lambda_1(\Omega)$. Moreover, $u_c$ at the level $\nu_c \in \left ( 0, \frac{1}{N}\left ( \frac{\mathcal{S}}{2} \right ) ^\frac{N}{2} \right )$.  
\end{theorem}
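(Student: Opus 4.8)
The plan is to linearize the geometry of the problem through the dual (Colin--Jeanjean) change of variables, the standard device for restoring differentiability in quasilinear problems of this type. I introduce $f$ as the solution of $f'(t)=(1+2f^2(t))^{-1/2}$, $f(0)=0$, and set $u=f(v)$. Using the identity $f'^2(1+2f^2)=1$ one gets $\frac12\int_\Omega|\nabla u|^2\,dx+\int_\Omega u^2|\nabla u|^2\,dx=\frac12\int_\Omega|\nabla v|^2\,dx$, so that $\tilde E(f(v))=I(v):=\frac12\int_\Omega|\nabla v|^2\,dx-\frac{1}{2\cdot 2^*}\int_\Omega f(v)^{2\cdot 2^*}\,dx$, which is of class $C^1$ on $H_0^1(\Omega)$, while the mass constraint becomes $v\in M_c:=\{v\in H_0^1(\Omega):\int_\Omega f(v)^2\,dx=c\}$. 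Constrained critical points of $I$ on $M_c$ correspond, via $u=f(v)$, to solutions of \eqref{5aa}, with the multiplier $\lambda_c$ entering through $-\Delta v=f(v)^{2\cdot 2^*-1}f'(v)+\lambda_c f(v)f'(v)$. Since $|f(t)|\le|t|$ and $f(t)^2\sim\sqrt2\,|t|$ as $|t|\to\infty$, the nonlinear term behaves like a Sobolev-critical power in $v$, and this is precisely what makes $\frac1N(\mathcal S/2)^{N/2}$ the relevant compactness threshold.

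For the ground state I would exhibit a local minimizer of $I$ on $M_c$. The Poincaré inequality together with $f(v)^2\le v^2$ gives $\|\nabla v\|_2^2\ge\lambda_1(\Omega)\,c$ on $M_c$, so the constraint is nonempty and stays away from the origin. Using $f(v)^{2\cdot 2^*}\le C|v|^{2^*}$ and the Sobolev inequality, $I(v)\ge\frac12\|\nabla v\|_2^2-C'\|\nabla v\|_2^{2^*}$; hence there is $\rho>0$ with $I\ge\frac14\|\nabla v\|_2^2$ on $B_\rho:=\{\|\nabla v\|_2\le\rho\}$ and $I>0$ on $\{\|\nabla v\|_2=\rho\}\cap M_c$. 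Testing with a competitor built from the first eigenfunction at the minimal gradient level shows that, for $c$ small, $\inf_{M_c\cap B_\rho}I<\inf_{M_c\cap\partial B_\rho}I$, so $\nu_c:=\inf_{M_c\cap B_\rho}I$ is attained in the interior; after fixing $\rho$ small this also gives $0<\nu_c<\frac1N(\mathcal S/2)^{N/2}$. Thus $\nu_c$ is a genuine (free) constrained critical level.

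The decisive step, and the one I expect to be the main obstacle, is compactness at the level $\nu_c$. A minimizing Palais--Smale sequence $\{v_n\}\subset M_c\cap B_\rho$ is bounded, so $v_n\rightharpoonup v$; the Sobolev-critical growth allows energy to concentrate in bubbles. I would run a Brezis--Nirenberg/Struwe-type global compactness analysis adapted to the $f$-nonlinearity, showing that the least energy of the limiting quasilinear problem on $\mathbb{R}^N$ equals $\frac1N(\mathcal S/2)^{N/2}$, so every nontrivial bubble carries at least this much energy while leaving the mass of the weak limit unchanged (a concentrating profile contributing vanishing $L^2$-mass in the $f$-sense). Because $\nu_c$ lies strictly below this quantum, no bubble can split off, forcing $v_n\to v$ strongly and $v\in M_c\cap B_\rho$ to realize $\nu_c$. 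Pinning down the exact value of the bubble quantum, and the accompanying mass-loss estimate that lets the constraint pass to the limit, is the technical heart of the argument.

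Finally I would promote the minimizer to a bona fide solution. Since $I(|v|)=I(v)$ and $|v|\in M_c$, the minimizer may be taken nonnegative; elliptic regularity applied to $-\Delta v=f(v)^{2\cdot 2^*-1}f'(v)+\lambda_c f(v)f'(v)$ gives smoothness, and the strong maximum principle yields $v>0$, whence $u_c=f(v_c)>0$ solves \eqref{5aa}. To locate the multiplier I would test the transformed equation against the positive first Dirichlet eigenfunction $\phi_1$, using $\int_\Omega\nabla v_c\cdot\nabla\phi_1\,dx=\lambda_1(\Omega)\int_\Omega v_c\phi_1\,dx$ together with the pointwise bounds on $f$ and $f'$; a sign comparison of $\lambda_1(\Omega)\int_\Omega (v_c-f(v_c)f'(v_c))\phi_1\,dx$ against $\int_\Omega f(v_c)^{2\cdot 2^*-1}f'(v_c)\phi_1\,dx$ then delivers $\lambda_c<\lambda_1(\Omega)$, the inequality that makes $-\Delta-\lambda_c$ coercive. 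The ground-state property follows from the construction: every solution in $\tilde S_c$ is a constrained critical point of $I$, and the local well built above captures the least such critical value, so $\nu_c$ is minimal among all solution energies.
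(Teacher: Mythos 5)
Your overall strategy---dualize via $f$, find a local constrained minimizer of $I$, recover compactness below a critical threshold---matches the paper's in spirit, but there is a genuine gap at the point where the local minimizer is promoted to a \emph{ground state}, and it traces back to the fact that you never use the star-shapedness of $\Omega$ (a hypothesis of the theorem). You minimize $I$ over $M_c\cap B_\rho$ with $B_\rho=\{\|\nabla v\|_2\le\rho\}$; this can plausibly produce a local minimizer, but your closing claim that the ``local well built above captures the least such critical value'' is unjustified: a constrained critical point with $\|\nabla v\|_2>\rho$ lies outside your well, and nothing in your argument prevents it from having energy below $\nu_c$. The paper avoids this by replacing the ball with the Pohozaev set $\mathcal{G}=\{v\in S_c^+:\int_\Omega|\nabla v|^2\,dx>\tfrac12\int_\Omega|f(v^+)|^{2\cdot2^*}\,dx\}$: the Pohozaev identity on a star-shaped domain (the boundary term $\int_{\partial\Omega}|\nabla v|^2\,\sigma\cdot n\,d\sigma$ has a sign) forces \emph{every} constrained critical point into $\mathcal{G}$, so $\nu_c=\inf_{\mathcal{G}}I$ is automatically minimal among all solution energies. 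Without this, or some substitute showing that every solution enters your well, the ground-state assertion of Theorem \ref{A} is not proved.

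A second, lesser issue: your compactness step is only a program. You propose a Struwe-type global compactness decomposition and concede that identifying the bubble quantum $\frac1N(\mathcal S/2)^{N/2}$ and the accompanying mass-loss estimate is ``the technical heart''; for a Sobolev-critical quasilinear nonlinearity that is precisely the part that must be written out, and it is absent. The paper sidesteps it entirely: since the weak limit $v_c$ of the Palais--Smale sequence is itself a constrained critical point (the mass constraint passes to the limit because $|f(v)|^2\lesssim|v|$ is subcritical in $v$), it lies in $\mathcal{G}$ and hence $I(v_c)\ge\nu_c$; combining this with the Brezis--Lieb splitting $I(v_n)=I(v_c)+I(w_n)+o_n(1)$ and the lower bound $I(w_n)\ge\frac{l}{N}+o_n(1)$ forces $l=0$ and strong convergence, with no bubble analysis and no use of the threshold at the ground-state level. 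You could adopt that shortcut, but it again requires the Pohozaev/star-shapedness input to place $v_c$ in $\mathcal{G}$.
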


\begin{remk}
    Due to the presence of quasilinear term, the natural energy functional is non-differentiable and variational method cannot be applied directly. To overecome this difficulty, we apply the dual approach to reformulate the original problem into a semilinear one. Based on the favorable properties of the variable substitution and combined with the Pohozaev identity, we define a new set $\mathcal{G}$. We can find a minimizer on $\mathcal{G}$ and establish the existence of the positive normalized ground state for \eqref{5aa}.    
\end{remk}

\begin{theorem}\label{B}
    According to the assumptions of Theorem \ref{A}, \eqref{5aa} has the second positive solution $\tilde{u}_c\ne u_c$.
\end{theorem}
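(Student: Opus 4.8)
\noindent The plan is to produce the second solution as a mountain-pass critical point of the differentiable functional supplied by the dual reformulation, and then to pull it back through the change of variables. Let $u=f(v)$ be the dual substitution, where $f$ solves $f'(t)=(1+2f(t)^2)^{-1/2}$ with $f(0)=0$; a direct computation turns $\tilde E$ into the $C^1$ functional
$$
\tilde I(v)=\frac12\int_\Omega|\nabla v|^2\,dx-\frac{1}{2\cdot 2^*}\int_\Omega|f(v)|^{2(2^*)}\,dx
$$
subject to the smooth constraint $S_c=\{v\in H_0^1(\Omega):\int_\Omega f(v)^2\,dx=c\}$, and the ground state $u_c$ of Theorem \ref{A} corresponds to a point $v_c\in S_c$ with $\tilde I(v_c)=\nu_c$. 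I would seek a second constrained critical point $\tilde v_c$ at a level $\tilde\nu_c>\nu_c$; then $\tilde u_c:=f(\tilde v_c)$ solves \eqref{5aa}, and distinctness from $u_c$ is automatic, since $f$ is a bijection and $\tilde I(\tilde v_c)\neq\tilde I(v_c)$.

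First I would set up the mountain-pass geometry on $S_c$. The minimization of Theorem \ref{A} should exhibit $v_c$ as a strict local minimizer of $\tilde I$ on $S_c$, so there exist $\rho>0$ and $A>\nu_c$ with $\tilde I\ge A$ on $\{v\in S_c:\|\nabla(v-v_c)\|_2=\rho\}$. On the other hand $\tilde I$ is unbounded below on $S_c$: along tall, thin, constraint-normalized spikes the critical term (which for large arguments behaves like $2^{2^{*}/2}\int|v|^{2^*}$) outweighs $\|\nabla v\|_2^2$, so there is $v_1\in S_c$ far from $v_c$ with $\tilde I(v_1)<\nu_c$. This gives the min-max value
$$
\tilde\nu_c=\inf_{\gamma\in\Gamma}\max_{s\in[0,1]}\tilde I(\gamma(s))\ge A>\nu_c,
$$
over paths $\gamma$ in $S_c$ from $v_c$ to $v_1$, together with a Palais--Smale sequence at level $\tilde\nu_c$ whose Lagrange multipliers I would show to remain bounded, so that the constraint passes to the limit.

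The main obstacle is the loss of compactness from the critical exponent, and the crux is the strict estimate
$$
\tilde\nu_c<\nu_c+\frac1N\left(\frac{\mathcal S}{2}\right)^{\frac{N}{2}}.
$$
A global compactness analysis shows that a non-convergent Palais--Smale sequence at level $\tilde\nu_c$ must split as a constrained solution $\bar v$ (retaining the full $L^2$-mass, whence $\tilde I(\bar v)\ge\nu_c$ by minimality of the ground state) plus at least one bubble of energy $\frac1N(\mathcal S/2)^{N/2}$, forcing $\tilde\nu_c\ge\nu_c+\frac1N(\mathcal S/2)^{N/2}$; the strict inequality therefore excludes concentration and forces strong convergence to a critical point at level $\tilde\nu_c$. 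To obtain the estimate I would choose the path through $v_c$ and a bubble $U_\varepsilon$ concentrating at an interior point of $\Omega$, i.e. essentially $s\mapsto(v_c+sRU_\varepsilon)$ renormalized onto $S_c$, and expand $\tilde I$ as $\varepsilon\to0$; the favorable sign of the interaction between $v_c$ and $U_\varepsilon$ produces a lower-order gain that pushes the maximal energy strictly below the threshold. Controlling this lower-order balance — exactly as in the Brezis--Nirenberg analysis, with the star-shapedness of $\Omega$ and the smallness of $c$ entering the estimates — is the technical heart of the argument.

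Finally I would upgrade the critical point to a solution of \eqref{5aa}. Since both $\tilde I$ and the constraint $S_c$ are invariant under $v\mapsto|v|$, I may take $\tilde v_c\ge0$; elliptic regularity together with the strong maximum principle then gives $\tilde v_c>0$ in $\Omega$, so that $\tilde u_c=f(\tilde v_c)>0$ is a positive solution of \eqref{5aa} with Lagrange multiplier $\tilde\lambda_c$. As $\tilde I(\tilde v_c)=\tilde\nu_c>\nu_c=\tilde I(v_c)$, the two solutions carry distinct energies and hence $\tilde u_c\neq u_c$, which completes the proof.
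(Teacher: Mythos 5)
Your overall architecture — dualize, exhibit $v_c$ as a local minimizer on the constraint, run a mountain pass, prove a strict energy bound below the ground-state level plus one bubble, and exclude concentration by global compactness — is the same skeleton the paper uses. But there is one genuine gap that the paper's proof is specifically built around and that your proposal skips: the \emph{boundedness in $H_0^1(\Omega)$ of the Palais--Smale sequence at the mountain-pass level}. The standard constrained mountain pass theorem only yields a PS sequence, not a bounded one; here $\tilde I$ restricted to $S_c$ is unbounded below and of critical growth, there is no Pohozaev identity available along \emph{approximate} critical points, and the single relation obtained by testing $\tilde I'(v_n)-\lambda_n f(v_n)f'(v_n)\to 0$ against $v_n$ involves the unknown multipliers $\lambda_n$, which cannot be controlled before the sequence itself is bounded. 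Your remark that you ``would show the Lagrange multipliers remain bounded, so that the constraint passes to the limit'' addresses the wrong difficulty: the constraint is not the issue, coercivity is. The paper resolves this with Jeanjean's monotonicity trick (Theorem \ref{E}): for a.e.\ $\theta$ near $1$ one gets a bounded PS sequence for the perturbed functional $I_\theta$, hence an actual critical point $v_\theta$; for a genuine critical point the Pohozaev identity together with star-shapedness forces $v_\theta\in\mathcal{G}_\theta$, where \eqref{5ac} converts the uniform bound $m_\theta\le 2m_1$ into an $H_0^1$ bound on $\{v_{\theta_n}\}$ as $\theta_n\to1^-$. Without this (or an equivalent device) your PS sequence may escape to infinity in norm and the weak-limit/splitting analysis never starts.

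Two smaller points. First, your mountain-pass geometry rests on $v_c$ being a \emph{strict} local minimizer with a quantitative gap $A>\nu_c$ on a sphere around it; you assert this ``should'' follow from Theorem \ref{A} but do not derive it. The paper instead gets the geometry from the gap $\nu_c<\inf_{\partial\mathcal{G}}I$ of Lemma \ref{D}(2): every admissible path from $v_c$ to a point outside $\mathcal{G}$ must cross $\partial\mathcal{G}$, which is what actually supplies the uniform barrier. Second, in the energy estimate the concentrating profile must be adapted to the quasilinear critical exponent $2\cdot 2^*$: the correct family is $w_\epsilon=(N(N-2)\epsilon)^{(N-2)/8}(\epsilon+|x|^2)^{-(N-2)/4}$, whose \emph{square} is the Aubin--Talenti instanton, and the expansion is carried out for the original functional $E$ in the $u$-variable before pulling back through $f^{-1}$; a generic ``bubble $U_\varepsilon$'' in the $v$-variable with the usual exponent will not produce the stated threshold. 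The interaction-term bookkeeping you describe is otherwise in the right spirit of Lemma \ref{F}.
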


\begin{remk}
    Note $u_c$ is a local minimizer on $\tilde{S}_c$. Based on this, we can further construct a mountain pass structure (Lemma \ref{G}). To obtain the normalized mountain pass solution, we need to find a bounded Palais-Smale sequence and recover compactness, each of which seems hard to verify. We apply Jeanjean’s monotonicity trick to find a bounded Palais-Smale sequence. Additionally, the energy estimate of the mountain pass solution plays a crucial role in recovering compactness.
\end{remk}

The paper is organized as follows. In Section 2, we reformulate the quasilinear problem as a semilinear one using a change of variables. In Section 3, we prove the existence of a normalized ground state (Theorem \ref{A}). In Section 4, we prove the existence of a normalized mountain pass solution (Theorem \ref{B}).

\section{Preliminarise}
To find solutions of \eqref{5aa}, we search for critical points of the energy functional $\tilde{E}(u)$ under the constraint $\tilde{S}_c$. However, 
$\tilde{E}(u)$ is not well defined in $H_0^1(\Omega)$. In order to apply variational methods, inspired by \cite{jean2004, wzq2003-2}, we make a change of variables by setting $v=f^{-1}(u)$, where $f$ is defined by 
\begin{equation}
    \left\{\begin{aligned}
        &f(0)=0,\\
        &f'(t)=(1+2|f(t)|^2)^{-\frac{1}{2} },\ t>0,\\
        &f(t)=-f(-t),\ t<0.
    \end{aligned}
    \right.\nonumber
\end{equation}
Then $f$ is uniquely defined, smooth, and invertible. Furthermore, $f$ satisfies the following properties, which have been proved in \cite{jean2004, wzq2003-2,Do2009}.
\begin{lemma}\label{C}
    (1) $|f'(t)| \leq 1$ for all $t \in \mathbb{R}$;\\

    (2) $|f(t)| \leq |t|$ for all $t \in \mathbb{R}$;\\

    (3) $|f(t)| \leq 2^{\frac{1}{4}} |t|^{\frac{1}{2}}$ for all $t \in \mathbb{R}$;\\

    (4) $\frac{1}{2} f(t) \leq tf'(t) \leq f(t)$ for all $t \in \mathbb{R}^+$;\\

    (5) $\frac{1}{2} f^2(t) \leq f(t) f'(t) t \leq f^2(t)$ for all $t \in \mathbb{R}$;\\

    (6) there exists a positive constant $C$ such that
    $$
    |f(t)| \geq 
    \begin{cases}
    C|t|, & \text{if } |t| \leq 1, \\
    C|t|^{\frac{1}{2}}, & \text{if } |t| \geq 1.
    \end{cases}
    $$
\end{lemma}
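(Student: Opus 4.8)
The plan is to reduce every estimate to one explicit integral representation obtained by inverting the defining ODE, and then to argue only for $t\ge 0$, extending to all of $\mathbb{R}$ by the oddness of $f$. Since $f'(t)=(1+2f(t)^2)^{-1/2}>0$ for $t>0$ and $f(0)=0$, the function $f$ is strictly increasing and positive on $(0,\infty)$; because $1+2f^2\ge 1$ we get $0<f'(t)\le 1$ at once, which is (1), and integrating this bound from $0$ to $t$ gives $|f(t)|\le|t|$, which is (2). For the remaining properties I would pass to the inverse $g:=f^{-1}$, which by the inverse function theorem satisfies $g'(u)=1/f'(g(u))=\sqrt{1+2u^2}$ and $g(0)=0$, so that
\[
 t=\int_0^{f(t)}\sqrt{1+2s^2}\,\mathrm{d}s,\qquad t\ge 0.
\]
All of (3)--(6) then follow from elementary comparison estimates on this single integral.

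Writing $u=f(t)$, property (3) comes from $\sqrt{1+2s^2}\ge\sqrt2\,s$, which yields $t\ge\int_0^u\sqrt2\,s\,\mathrm{d}s=u^2/\sqrt2$, hence $u\le 2^{1/4}t^{1/2}$. For (4) I would write $tf'(t)=\big(\int_0^u\sqrt{1+2s^2}\,\mathrm{d}s\big)\big/\sqrt{1+2u^2}$. The upper bound $tf'(t)\le f(t)=u$ is immediate, since the integrand is increasing gives $\int_0^u\sqrt{1+2s^2}\,\mathrm{d}s\le u\sqrt{1+2u^2}$. For the lower bound $\tfrac12 f(t)\le tf'(t)$ I would set $\phi(u):=\int_0^u\sqrt{1+2s^2}\,\mathrm{d}s-\tfrac{u}{2}\sqrt{1+2u^2}$ and compute $\phi(0)=0$ together with $\phi'(u)=\big(2\sqrt{1+2u^2}\big)^{-1}>0$, so $\phi\ge 0$. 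Multiplying both inequalities in (4) by $f(t)>0$ then produces (5) on $(0,\infty)$, and since every term in (5) is even in $t$, the chain extends to all of $\mathbb{R}$.

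For (6) I would treat the two regimes separately. Near the origin, $f^2\le t^2$ gives $f'(t)\ge(1+2t^2)^{-1/2}$, so for $0\le t\le 1$ we get $f(t)\ge\int_0^t(1+2s^2)^{-1/2}\,\mathrm{d}s\ge 3^{-1/2}t$, the linear lower bound. For large $t$, the inequality $\sqrt{1+2s^2}\le 1+\sqrt2\,s$ in the integral representation gives $t\le u+u^2/\sqrt2$; once $u=f(t)\ge 1$ this forces $u\ge C't^{1/2}$, and the estimate carries over to all $t\ge 1$ after patching across the bounded transition interval, where $f$ stays between two fixed positive constants. Taking $C$ to be the smallest of the constants produced then closes (6).

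The only genuinely non-routine points are the \emph{lower} bounds in (4) and (6): unlike the upper bounds, which follow from monotonicity of the integrand alone, they require either the auxiliary monotone function $\phi$ or a splitting of $[0,\infty)$ into a near-origin region, a bounded transition region, and the region $\{f\ge 1\}$ -- precisely where the growth of $f$ turns from linear into square-root. I expect this regime-matching in (6) to be the main thing to carry out with care; everything else is direct estimation of the integral representation.
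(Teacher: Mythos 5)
Your proof is correct. Note that the paper itself does not prove Lemma \ref{C} at all --- it simply cites the references \cite{jean2004, wzq2003-2, Do2009} --- so there is no in-paper argument to compare against; your write-up supplies a complete, self-contained derivation in the standard way, via the integral representation $t=\int_0^{f(t)}\sqrt{1+2s^2}\,\mathrm{d}s$ coming from the inverse function. I checked the only delicate computations: $\phi'(u)=\bigl(2\sqrt{1+2u^2}\bigr)^{-1}$ is indeed what one gets for the lower bound in (4), the evenness argument correctly transfers (5) from $(0,\infty)$ to all of $\mathbb{R}$, and the three-regime patching in (6) (using that $f^{-1}(1)>1$, so the transition interval is bounded and $f$ is bounded away from $0$ there) closes the square-root lower bound. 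No gaps.
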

Define 
\begin{align*}
    \tilde{I}(v):= \tilde{E}(f(v))=\frac{1}{2}\int_{\Omega}|\nabla v|^2 d x-\frac{1}{2\cdot2^*}\int_{\Omega}|f(v)|^{2(2^*)}dx.
\end{align*}
The functional $\tilde{I}(v)$ is well defined in $H_0^1(\Omega)$ and belongs to $C^1$. It is well known that the critical points of $\tilde{I}(v)$ are weak solutions of the semilinear elliptic equation:
\begin{equation}\label{5ad}
    \left\{\begin{aligned}
        &-\Delta v-|f(v)|^{2(2^*)-2}f(v)f'(v) =\lambda f(v)f'(v),\ x\in\Omega,\\
        &v>0\ \text {in}\ \Omega,\ v=0\ \text {on}\ \partial \Omega,\  \int _{\Omega}|f(v)|^2dx=c.\\
    \end{aligned}
    \right.
\end{equation}
Moreover, if $v$ is a critical point of $\tilde{I}(v)$, then $u=f(v)$ is a weak solution of \eqref{5aa}.

To find positive solutions of \eqref{5ad}, we search for critical points of the energy  
\begin{align*}
    I(v)=\frac{1}{2}\int_{\Omega}|\nabla v|^2 d x-\frac{1}{2\cdot2^*}\int_{\Omega}|f(v^+)|^{2(2^*)}dx,
\end{align*}
under the constraint $S_c^+$, where 
$$
S_c^+:=\{v \in H_0^1(\Omega)\ \text {s.t.} \int_{\Omega}|f(v^+)|^2 d x=c\}.
$$
Note 
\begin{align*}
    &E(u)=\frac{1}{2}\int_{\Omega}|\nabla u|^2 d x+\int_{\Omega}u^2|\nabla u|^2 dx-\frac{1}{2\cdot 2^*}\int_{\Omega}|u^+|^{2(2^*)}dx,\\
    &\tilde{S}_c^+:=\{u \in \mathcal{X}\ \text {s.t.} \int_{\Omega}|u^+|^2 d x=c\}.
\end{align*}

Now, we introduce a key set $\mathcal{G}_c$. As we all know, any critical point $v$ of $\left.I\right|_{S_c^+}$ satisfies the following Pohozaev identity:
\begin{align*}
    \frac{N-2}{2} \int_{\Omega}\left|\nabla v\right|^2 dx+\frac{1}{2}\int_{\partial \Omega} \left|\nabla v\right|^2\sigma\cdot n d\sigma =\frac{N\lambda}{2}\int_{\Omega}|f(v^+)|^2dx + \frac{N}{2\cdot2^*}\int_{\Omega}|f(v^+)|^{2(2^*)}dx.
\end{align*}
Furthermore, $v$ is a solution of \eqref{5ad}, we also have
\begin{align*}
    \int_{\Omega}\left|\nabla v\right|^2 dx =\lambda\int_{\Omega}f(v^+)f'(v^+)v^+dx + \int_{\Omega}|f(v^+)|^{2(2^*)-2}f(v^+)f'(v^+)v^+dx.
\end{align*}
Note that $\sigma\cdot n > 0$ since $\Omega$ is star-shaped with respect to the origin. By Lemma \ref{C}, we get $v \in \mathcal{G}$, where 
$$
\mathcal{G}:=\left\{v \in S_c^+:  \int_{\Omega}\left|\nabla v\right|^2 dx>\frac{1}{2}\int_{\Omega}|f(v^+)|^{2(2^*)}dx\right\}.\\
$$
For any $v \in \mathcal{G}$, 
\begin{align}
    I(v)>\left (\frac{1}{2}-\frac{1}{2^*} \right )\int_{\Omega}\left|\nabla v\right|^2 dx>\frac{1}{N}\int_{\Omega}\left|\nabla v\right|^2 dx.\label{5ac}  
\end{align}
which implies that $I$ is bounded from below on $\mathcal{G}$. Further, note that 
$$
\partial \mathcal{G}:=\left\{v \in S_c^+: \int_{\Omega}\left|\nabla v\right|^2 dx =\frac{1}{2}\int_{\Omega}|f(v^+)|^{2(2^*)}dx\right\}.
$$
For any $v \in \partial \mathcal{G}$, by Lemma \ref{C}, we have 
\begin{align}
    \int_{\Omega}\left|\nabla v\right|^2 dx =\frac{1}{2}\int_{\Omega}|f(v^+)|^{2(2^*)}dx\le 2^{\frac{2^*}{2}}\int_{\Omega}|v^+|^{2^*}dx\le \left ( \frac{\mathcal{S}}{2}  \right ) ^{-\frac{2^*}{2}}(\int_{\Omega}\left|\nabla v\right|^2 dx)^\frac{2^*}{2}. \label{5ae}
\end{align}
Hence, $\int_{\Omega}\left|\nabla v\right|^2 d x$ has a positive lower bound on $\partial \mathcal{G}$.






\section{Positive normalized ground state solution}
Based on the observations about $\mathcal{G}$ in Section 2, we consider the following constrained minimization problem:
$$
\nu_c:=\underset{v\in\mathcal{G}}{\inf} I(v).
$$
Before proving Theorem \ref{A}, we introduce an important result.
\begin{lemma}\label{D}
    Let $\Omega$ be smooth, bounded, and star-shaped with respect to the origin.
    If $v$ is a critical point of $\left.I\right|_{S_c^+}$, then $v \in \mathcal{G}$. Furthermore, we have\\
	$\mathrm{(1)}$ Any sequence $\{v_n\}\subset \mathcal{G}$ satisfying $\limsup _{n \rightarrow \infty} I\left(v_n\right)<\infty$ is bounded in $H_0^1(\Omega).$\\
    $\mathrm{(2)}$ There exists $c_0$ small, let $0<c<c_0$. Then $\mathcal{G} \neq \emptyset$, and it holds
    \begin{align}
        0<\inf _{v \in \mathcal{G}} I(v)<\inf _{v \in \partial \mathcal{G}} I(v).\label{5af}
    \end{align}
\end{lemma}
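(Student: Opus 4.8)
The plan is to establish the three assertions in turn, all resting on the two identities recorded just before the statement: the Nehari-type identity obtained by testing \eqref{5ad} with $v$, giving $A=\lambda P+Q$ where $A=\int_\Omega|\nabla v|^2\,dx$, $D=\int_\Omega|f(v^+)|^{2(2^*)}dx$, $P=\int_\Omega f(v^+)f'(v^+)v^+\,dx$ and $Q=\int_\Omega|f(v^+)|^{2(2^*)-2}f(v^+)f'(v^+)v^+\,dx$; and the Pohozaev identity, whose boundary term $\Gamma:=\int_{\partial\Omega}|\nabla v|^2\,\sigma\cdot n\,d\sigma$ is nonnegative because $\Omega$ is star-shaped. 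Property (5) of Lemma \ref{C} yields $\tfrac12 D\le Q\le D$, with $Q>\tfrac12 D$ whenever $v^+\not\equiv0$, together with $0\le P\le c$. For the membership claim I would substitute $A=\lambda P+Q$ into Pohozaev and use $\tfrac{N}{2\cdot 2^*}=\tfrac{N-2}{4}$ to isolate
$$
\tfrac{\lambda}{2}\bigl(Nc-(N-2)P\bigr)=\tfrac12\Gamma+\tfrac{N-2}{2}\Bigl(Q-\tfrac{D}{2}\Bigr)\ge 0 .
$$
Since $P\le c$ forces $Nc-(N-2)P\ge 2c>0$, this gives $\lambda\ge 0$, whence $A=\lambda P+Q\ge Q>\tfrac12 D$, i.e. $v\in\mathcal{G}$.

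Assertion (1) is then immediate from \eqref{5ac}: on $\mathcal{G}$ one has $I(v)>\tfrac1N\int_\Omega|\nabla v|^2\,dx$, so $\limsup_n I(v_n)<\infty$ forces $\sup_n\int_\Omega|\nabla v_n|^2\,dx<\infty$, which is boundedness in $H_0^1(\Omega)$.

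For assertion (2) I would proceed in three steps. First, to see $\mathcal{G}\ne\emptyset$ I fix a positive $\phi\in H_0^1(\Omega)$ and choose $t_c>0$ with $\int_\Omega|f(t_c\phi)|^2\,dx=c$; properties (2) and (6) of Lemma \ref{C} show $t_c\to0$ as $c\to0$, and for small $t_c$ the critical term $\int_\Omega|f(t_c\phi)|^{2(2^*)}dx$ is of higher order than $t_c^2\int_\Omega|\nabla\phi|^2\,dx$, so $t_c\phi\in\mathcal{G}$. The lower bound $\inf_{\mathcal{G}}I>0$ follows by combining \eqref{5ac} with Poincar\'e's inequality and property (2): $c=\int_\Omega|f(v^+)|^2\,dx\le\int_\Omega|v^+|^2\,dx\le\lambda_1(\Omega)^{-1}\int_\Omega|\nabla v|^2\,dx$, so $I(v)>\tfrac{\lambda_1(\Omega)c}{N}>0$ on $\mathcal{G}$. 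Second, the chain \eqref{5ae} gives $\int_\Omega|\nabla v|^2\,dx\ge(\mathcal{S}/2)^{N/2}$ for every $v\in\partial\mathcal{G}$, where $\int_\Omega|\nabla v|^2\,dx=\tfrac12 D$, so $I(v)=\tfrac1N\int_\Omega|\nabla v|^2\,dx\ge\tfrac1N(\mathcal{S}/2)^{N/2}$ and thus $\inf_{\partial\mathcal{G}}I\ge\tfrac1N(\mathcal{S}/2)^{N/2}$. Third, evaluating $I$ at the interior competitor $t_c\phi$ gives $\inf_{\mathcal{G}}I\le I(t_c\phi)\to0$ as $c\to0$, so there is $c_0>0$ with $\inf_{\mathcal{G}}I<\tfrac1N(\mathcal{S}/2)^{N/2}\le\inf_{\partial\mathcal{G}}I$ for $0<c<c_0$, which is \eqref{5af}. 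This simultaneously places the level $\nu_c$ in the interval $\bigl(0,\tfrac1N(\mathcal{S}/2)^{N/2}\bigr)$ demanded by Theorem \ref{A}.

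The main obstacle is the strict separation $\inf_{\mathcal{G}}I<\inf_{\partial\mathcal{G}}I$, since this is exactly what prevents a minimizing sequence from drifting onto $\partial\mathcal{G}$, where critical Sobolev concentration would threaten compactness, and hence keeps the minimizer in the open set $\mathcal{G}$. Its proof genuinely requires the smallness of $c$, entering through the test-function upper bound, matched against the Sobolev lower bound on $\partial\mathcal{G}$ produced by the critical exponent in \eqref{5ae}. The other delicate point is the membership step, whose use of $\lambda\ge0$ relies crucially on the star-shapedness of $\Omega$ through the sign of $\Gamma$.
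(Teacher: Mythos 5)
Your proposal is correct and follows essentially the same route as the paper: part (1) via \eqref{5ac}, the membership claim via the Pohozaev and Nehari identities combined with star-shapedness and Lemma \ref{C}(5), and part (2) via a small-mass test function for nonemptiness and the upper bound, the Sobolev lower bound \eqref{5ae} on $\partial\mathcal{G}$, and Poincar\'e for positivity of the infimum. The only cosmetic differences are that you build the competitor directly in the $v$-variable as $t_c\phi$ rather than as $f^{-1}(\sqrt{c}\,u)$ with $u\in\tilde S_1^+$, and that you make explicit the intermediate step $\lambda\ge 0$ which the paper leaves implicit.
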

\begin{proof}
    \textbf{$(1)$} Let $(v_n) \subset \mathcal{G}$ satisfy $\underset{n \rightarrow \infty}{\limsup} I\left(v_n\right)<\infty $. Using \eqref{5ac}, we get
    $$
    \limsup _{n \rightarrow \infty} \int_{\Omega}|\nabla v_n|^2 dx \le N \limsup _{n \rightarrow \infty}I\left(v_n\right)<\infty,
    $$
    implying that ${v_n}$ is bounded in $H_0^1(\Omega)$.\\
    \textbf{$(2)$} Take $c_1$ small such that for any $u \in \tilde{S}_1^+$, $0<c<c_1$,  
    \begin{align*}
        &\int_{\Omega}|\nabla(\sqrt{c} u)|^2 d x +2\int_{\Omega}(\sqrt{c} u)^2|\nabla(\sqrt{c} u)|^2 d x\\
        =&c \int_{\Omega}|\nabla u|^2 dx +2c^2\int_{\Omega}u^2|\nabla u|^2 d x>\frac{c^{2^*}}{2}\int_{\Omega}|u|^{2(2^*)}dx=\int_{\Omega}|\sqrt{c}u|^{2(2^*)}dx. 
    \end{align*}
    Let $v=f^{-1}(\sqrt{c}u)$. Then $v \in \mathcal{G}$ yielding that $\mathcal{G}$ is not empty. Further, using \eqref{5ae}, one gets 
    $$
    \inf _{v \in \partial \mathcal{G}} \int_{\Omega}|\nabla v|^2 dx \geq \left ( \frac{\mathcal{S}}{2} \right ) ^\frac{N}{2}.
    $$
    Then by \eqref{5ac} we have 
    $$
    \inf _{v \in \partial \mathcal{G}}I(v)\ge \frac{1}{N}\int_{\Omega}\left|\nabla v\right|^2 dx\ge \frac{1}{N}\left ( \frac{\mathcal{S}}{2} \right ) ^\frac{N}{2}.
    $$
    For any $u\in \tilde{S}_1^+$, let $v=f^{-1}(\sqrt{c} u)$. Take $c_2$ small such that for $0<c<c_2$, we have 
    \begin{align*}
        I(v)&=E (\sqrt{c} u)<\frac{1}{2}\left ( \int_{\Omega}c|\nabla u|^2 d x + 2c^2\int_{\Omega}u^2|\nabla u|^2 d x\right )\\  
        &<\frac{1}{N}\left (\frac{\mathcal{S}}{2}\right)^\frac{N}{2} < \inf _{v \in \partial \mathcal{G}} I(v).
    \end{align*}
    When $c<\min (c_1, c_2)$, we can take some $u\in \tilde{S}_1^+$ such that $v_1=f^{-1}(\sqrt{c} u)\in \mathcal{G}$ and 
    $$
    \inf _{v \in \mathcal{G}}I(v)\le I(v_1)<\inf _{v \in \partial \mathcal{G}}I(v).
    $$
    Using \eqref{5ac} again, we have   
    $$
    \inf _{v \in \mathcal{G}}I(v)\ge \frac{1}{N}\int_{\Omega}|\nabla v|^2 dx \ge \frac{1}{N}\lambda_1c>0.
    $$
    Hence, \eqref{5af} holds true.      
\end{proof}

\begin{proof}[proof of Theorem \ref{A}]
    The value range of $\nu_c$ was established in the proof of Lemma \ref{D}. We now demonstrate that $\nu_c$ is attainable within $\mathcal{G}$. Consider a minimizing sequence $\{v_n\} \subset \mathcal{G}$ such that $I(v_n) \rightarrow \nu_c$ as $n \rightarrow \infty$. Lemma \ref{D}(1) implies that $\{v_n\}$ is bounded in $H_0^1(\Omega)$. Lemma \ref{D}(2) guarantees that $\{v_n\}$ maintains a positive distance from $\partial \mathcal{G}$. Indeed, assume that there exists a sequence $\{w_n\} \subset \partial \mathcal{G}$ satisfying $v_n - w_n \rightarrow 0$ in $H_0^1(\Omega)$ (up to a subsequence). The boundedness of $\{v_n\}$ ensures that $\{w_n\}$ is likewise bounded in $H_0^1(\Omega)$. Consequently,
\[
\inf_{v \in \mathcal{G}} I(v) = \nu_c = \lim_{n \rightarrow \infty} I(v_n) = \lim_{n \rightarrow \infty} I(w_n) \geq \inf_{v \in \partial \mathcal{G}} I(v),
\]
which contradicts inequality~\eqref{5af}. By Ekeland variational principle, we can assume that $(\left.I\right|_{S_c^+})^{\prime}\left(v_n\right)=\left(\left.I\right|_{\mathcal{G}}\right)^{\prime}\left(v_n\right) \rightarrow 0$ as $n \rightarrow \infty$. Up to subsequences,
    we assume that
    \begin{align*}
        & v_n \rightharpoonup v_c \quad \text { weakly in } H_0^1(\Omega), \\
        & v_n \rightharpoonup v_c \quad \text { weakly in } L^{2^*}(\Omega), \\
        & v_n \rightarrow v_c \quad \text { strongly in } L^r(\Omega) \text { for } 2<r<2^*, \\
        & v_n \rightarrow v_c \quad \text { almost everywhere in } \Omega.
    \end{align*}

    On the one hand, we can verify that $v_c \in S_c^+$ is a critical point of $I$ constrained on $S_c^+$. Using Lemma \ref{D} (1), we get that $v_c \in \mathcal{G}$ and so $I(v_c)\geq \nu_c$.

    On the other hand, let $w_n=v_n-v_c$. Since $(\left.I\right|_{S_c^+})^{\prime}\left(v_n\right)\to 0 \ as\  n\to \infty$, there exists $\lambda_n$ such that $I'(v_n)-\lambda_nf(v_n^+)f'(v_n^+) \to 0 \ as \ n \to \infty$.  Let $\lambda_c$ be the Lagrange multiplier corresponding to $v_c$. Then for some $\psi \in H_0^1(\Omega)$ with $\int_{\Omega}f(v_c^+)f'(v_c^+)\psi dx \ne 0$, we obtain 
    \begin{align*}
        &\lambda_n=\frac{1}{\int_{\Omega}f(v_n^+)f'(v_n^+)\psi dx} \left (  \left \langle I'(v_n),\psi \right \rangle+o_n(1) \right )\\
        \overset{n\to\infty}{\longrightarrow}&\frac{1}{\int_{\Omega}f(v_c^+)f'(v_c^+)\psi dx} \left (  \left \langle I'(v_c),\psi \right \rangle+o_n(1) \right )=\lambda _c.
    \end{align*}
    Using the Brézis-Lieb Lemma \cite{Brezis1983} and the facts that
    $$
    I'(v_n)-\lambda_nf(v_n^+)f'(v_n^+) \to 0,\  I'(v_c)-\lambda_cf(v_c^+)f'(v_c^+) = 0,
    $$
    we get 
    $$
    \int_{\Omega}|\nabla w_n|^2 dx=\int_{\Omega}|f(w_n)|^{2(2^*)-2}f(w_n)f'(w_n)w_ndx+o_n(1).
    $$
    Assume that $\int_{\Omega}|\nabla w_n|^2 dx\to l\ge 0,\ \int_{\Omega}|f(w_n)|^{2(2^*)-2}f(w_n)f'(w_n)w_ndx \to l\ge 0$. Using Brézis-Lieb Lemma again, we deduce that
    $$
    I(v_n)=I(v_c)+I(w_n)+o_n(1).
    $$
    Using Lemma \ref{C}, we have 
    \begin{align*}
        I(w_n)&=\frac{1}{2}\int_{\Omega}|\nabla w_n|^2 dx-\frac{1}{2\cdot 2^*}\int_{\Omega}|f(w_n)|^{2(2^*)}dx+o_n(1)\\
        &\ge \frac{1}{2}\int_{\Omega}|\nabla w_n|^2 dx-\frac{1}{2^*}\int_{\Omega}|f(w_n)|^{2(2^*)-2}f(w_n)f'(w_n)w_ndx+o_n(1) \\
        &=\left (\frac{1}{2}-\frac{1}{2^*}\right ) l+o_n(1)=\frac{l}{N}+o_n(1),
    \end{align*}
    that implies $I(w_n)\ge o_n(1)$, and thus $\nu_c=\lim_{n \to \infty} I(v_n)\ge I(v_c).$ Hence, $I(v_c)=\nu_c$, which in turn shows that $l = 0$ and therefore $v_n \to v_c$ strongly in $H_0^1(\Omega)$. By Lagrange multiplier principle, for some $\lambda_c\in \mathbb{R}$, $v_c$ satisfies
    \begin{align}\label{5ag}
        -\Delta v_c-|f(v_c^+)|^{2(2^*)-2}f(v_c^+)f'(v_c^+) =\lambda_c f(v_c^+)f'(v_c^+).
    \end{align}
    From the definition of $f$, we know that $v > 0$ is equivalent to $f(v) > 0$. Multiplying \eqref{5ag} by $v_c^-$ and integrating over $\Omega$, we find $\int_{\Omega}|\nabla v_c^-|^2 dx=0$. Hence $v_c^-= 0$, $\int_{\Omega}|f(v_c)|^2 dx=\int_{\Omega}|f(v_c^+)|^2 dx=c$ and $v_c$ is a solution of \eqref{5ad}. By strong maximum principle $v_c$ is positive. 
    
    Next, we show that $v_c$ is a normalized ground state, i.e.,
    $$
    \tilde{I} (v_c)=\inf \left \{ \tilde{I} (v):v\in S_c,(\tilde{I} |_{S_c})^{\prime}\left(v\right)=0\right \}, 
    $$
    where 
    $$
    \tilde{I}(v)=\frac{1}{2}\int_{\Omega}|\nabla v|^2 d x-\frac{1}{2\cdot 2^*}\int_{\Omega}|f(v)|^{2(2^*)}dx.
    $$
    Since $v_c$ is positive, $I(v_c)=\tilde{I}(v_c) $. Let $v\in S_c$ satisfy $(\left.I\right|_{S_c^+})^{\prime}\left(v\right)= 0$. Then $|v|\in S_c^+$. From the identity $\int_{\Omega}|\nabla v|^2 dx=\int_{\Omega}|\nabla |v||^2 dx$, it can be verified that $|v|\in \mathcal{G}$ and 
    $$
    \tilde{I}(v_c)=I(v_c)\le I(|v|) =\tilde{I}(|v|)= \tilde{I}(v).
    $$
    From the arbitrariness of $v$, we know $v_c$ is a normalized ground state to \eqref{5ad}. Hence, $u_c=f(v_c)$ is a normalized ground state solution to \eqref{5aa}.

    Finally, we prove that $\lambda_c<\lambda_1(\Omega)$. 
    Let $e_1$ be the corresponding positive
    unit eigenfunction of $\lambda_1(\Omega)$.  Multiplying equation \eqref{5ad} for $v_c$ by $e_1$ and integrating over $\Omega$, we obtain 
    \begin{align*}
        &\lambda_c\int_{\Omega}f(v_c)f'(v_c)e_1 dx+\int_{\Omega}|f(v_c)|^{2(2^*)-2}f(v_c)f'(v_c)e_1dx\\
        =&\int_{\Omega}\nabla v_c\nabla e_1 dx=\lambda_1(\Omega)\int_{\Omega}v_ce_1 dx.
    \end{align*}
     Combining Lemma \ref{C}, we conclude $\lambda_c<\lambda_1(\Omega)$. 
\end{proof}

\section{The second positive normalized solution of M-P type}
After finding that $v_c$ is a local minimizer on $S_c^+$, we can construct a mountain pass structure on $S_c^+$:
$$
m(c):=\inf _{\gamma  \in \Gamma} \sup _{t \in[0,1]} I(\gamma (t))>\max \left \{ I(v_c), I(w)\right \}, 
$$
where
\begin{align}
    \Gamma:=\left\{\gamma \in C\left([0,1], S_c^+\right) \text { s.t. } \gamma (0) =u_c, \gamma (1) =w \right\}.\label{5ca}
\end{align}
Lemma \ref{G} ensures the existence of $w$. 
For the normalized mountain pass solution, it seems difficult to show the boundedness of the Palais-Smale sequence for $I(v)$. To overcome this difficulty, we  use the following abstract result \cite{jean1999} to construct a special Palais-Smale sequence for $I(v)$.
\begin{theorem}\label{E}
(Monotonicity trick). Let $J \subset \mathbb{R}^{+}$be an interval. We consider a family $\left(I_\theta\right)_{\theta \in I}$ of $C^1$ functionals on $H_0^1(\Omega)$ of the form
$$
I_\theta(v)=A(v)-\theta B(v), \ \theta \in J,
$$
where $B(u) \geq 0, \forall u \in W$ and such that either $A(u) \rightarrow \infty$ or $B(u) \rightarrow \infty$ as $\|u\| \rightarrow$ $\infty$. We assume there are two points $\left(v_1, v_2\right)$ in $S_c^{+}$(independent of $\theta$ ) such that setting
$$
\Gamma=\left\{\gamma \in C\left([0,1], S_c^{+}\right), \gamma(0)=v_1, \gamma(1)=v_2\right\}
$$
there holds, $\forall \tau \in J$,
$$
m_\theta:=\inf _{\gamma \in \Gamma} \sup _{t \in[0,1]} I_\theta(\gamma(t))>\max \left\{I_\theta\left(v_1\right), I_\theta\left(v_2\right)\right\}.
$$
Then, for almost every $\theta \in J$, there is a sequence $\left\{v_n\right\} \subset S_c^{+}$such that\\
(i) $\left\{v_n\right\}$ is bounded in $H_0^1(\Omega)$, (ii) $E_\theta\left(v_n\right) \rightarrow m_\theta$, (iii) $\left.E_\theta^{\prime}\right|_{S_c^{+}}\left(v_n\right) \rightarrow 0$ in $H_0^1(\Omega)^{\prime}$.
\end{theorem}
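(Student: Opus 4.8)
The plan is to follow Struwe's monotonicity argument in the form systematized by Jeanjean. The starting observation is that, because $B(v)\ge 0$, the map $\theta\mapsto I_\theta(v)$ is non-increasing for every fixed $v$, hence non-increasing along every path $\gamma\in\Gamma$; taking $\sup_t$ and then $\inf_\gamma$ preserves monotonicity, so $\theta\mapsto m_\theta$ is non-increasing on $J$. A monotone real function is differentiable almost everywhere, so $m'(\theta)$ exists for a.e. $\theta\in J$. I would fix such a point $\theta$ and produce the sequence $\{v_n\}$ there; the measure-zero exceptional set accounts exactly for the ``almost every'' in the conclusion.

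Next, fix a point $\theta$ of differentiability and pick $\theta_n\nearrow\theta$ with $\theta_n\in J$. For each $n$ choose a nearly optimal path $\gamma_n\in\Gamma$ for $I_{\theta_n}$, namely $\max_t I_{\theta_n}(\gamma_n(t))\le m_{\theta_n}+(\theta-\theta_n)$. The crucial quantitative step is a uniform norm bound on the ``high part'' of these paths. Using $I_{\theta_n}(v)=I_\theta(v)+(\theta-\theta_n)B(v)$, for any $t$ with $I_\theta(\gamma_n(t))\ge m_\theta-(\theta-\theta_n)$ one gets
$$
B(\gamma_n(t))=\frac{I_{\theta_n}(\gamma_n(t))-I_\theta(\gamma_n(t))}{\theta-\theta_n}\le \frac{m_{\theta_n}-m_\theta}{\theta-\theta_n}+2,
$$
and the right-hand side converges to $-m'(\theta)+2$ since $m$ is differentiable at $\theta$; hence $B$ is bounded along these points. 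Then $A(\gamma_n(t))=I_\theta(\gamma_n(t))+\theta B(\gamma_n(t))$ is also bounded, because $I_\theta(\gamma_n(t))$ lies between $m_\theta-(\theta-\theta_n)$ and $m_{\theta_n}+(\theta-\theta_n)$. By the coercivity hypothesis (here $A(v)=\frac12\int_\Omega|\nabla v|^2\,dx$ is coercive), boundedness of both $A$ and $B$ forces $\|\gamma_n(t)\|\le K$ for a constant $K$ independent of $n$ and of the admissible $t$.

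Finally, I would combine this norm control with the strict mountain-pass inequality $m_\theta>\max\{I_\theta(v_1),I_\theta(v_2)\}$ to extract a bounded Palais--Smale sequence on $S_c^+$ at level $m_\theta$. The $\gamma_n$ are also almost optimal for $I_\theta$ itself, since $m_\theta\le\max_t I_\theta(\gamma_n(t))\le\max_t I_{\theta_n}(\gamma_n(t))\le m_{\theta_n}+(\theta-\theta_n)\to m_\theta$. I then run a localized deformation argument on the manifold $S_c^+$: if no bounded PS sequence existed at level $m_\theta$, the constrained gradient $\left.I_\theta\right|_{S_c^+}'$ would be bounded below in norm on the slab $\{v\in S_c^+:\|v\|\le K+1,\ |I_\theta(v)-m_\theta|\le\varepsilon\}$, and a pseudo-gradient deformation supported in this region would push the high part of $\gamma_n$ strictly below $m_\theta$ while fixing the endpoints (which already sit below $m_\theta$), contradicting the definition of $m_\theta$ as a minimax value. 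The norm bound $\|\gamma_n(t)\|\le K$ is precisely what keeps the deformation inside the controlled region and makes the resulting sequence bounded.

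The main obstacle is this last step: carrying out the constrained deformation so that it acts only on points of norm $\le K+1$ with energy near $m_\theta$, respects the manifold structure of $S_c^+$ (working with the tangential projection of the gradient and verifying the construction in the constrained setting), and returns an admissible deformed path in $\Gamma$. The delicate part is interleaving the choices of $\theta_n$, of the almost-optimal paths, and of the deformation so that the extracted points satisfy (i)--(iii) simultaneously; this is where the a.e. differentiability of $m$, through the uniform bound $K$, is indispensable, since without it the Palais--Smale sequence need not be bounded.
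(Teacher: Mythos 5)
The paper does not prove this statement: Theorem \ref{E} is imported verbatim as an abstract result and attributed to Jeanjean's monotonicity-trick paper \cite{jean1999}, so there is no in-paper proof to compare against. Your outline is a faithful reconstruction of the standard argument from that source, and the quantitative steps you do carry out are correct: the monotonicity of $\theta\mapsto m_\theta$ from $B\ge 0$, the a.e.\ differentiability, the bound $B(\gamma_n(t))\le \frac{m_{\theta_n}-m_\theta}{\theta-\theta_n}+2\to -m'(\theta)+2$ on the high parts of almost-optimal paths, and the deduction of a uniform norm bound $K$ from the coercivity of $A$ or $B$. The one place where your write-up stops short of a proof is the final step you yourself flag: the constrained quantitative deformation lemma on $S_c^+$ (equivalently, an application of Ekeland's principle to the minimax class restricted to the ball of radius $K+1$) is described as a plan rather than executed, and that is where the remaining technical work of \cite{jean1999} lives. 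Since the paper itself delegates the entire proof to that reference, your sketch is at least as complete as what the paper provides, and its architecture is the same as the cited original.
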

To apply the above theorem, we introduce the family of functionals 
\begin{align*}
    I_\theta(u)= \frac{1}{2}\int_{\Omega}|\nabla v|^2 dx-\frac{\theta}{2\cdot2^*}\int_{\Omega}|f(v^+)|^{2(2^*)}dx,
\end{align*}
where $\theta \in [1/2,1]$. Similar to the case that $\theta=1$, we define
$$
\mathcal{G}_\theta:=\left\{v \in S_c^+: \int_{\Omega}\left|\nabla v\right|^2 dx
>\frac{\theta}{2}\int_{\Omega}|f(v)|^{2(2^*)}dx\right\}.\\
$$
Note that $\mathcal{G} \subset \mathcal{G}_\theta$ for $\theta<1$. Hence, $\mathcal{G}_\theta$ is not empty when $\mathcal{G}\ne \emptyset$. We further set $\nu_{c,\theta}:=\underset{\mathcal{G}_\theta}{\inf} I_\theta.$

The following Lemma ensures that $I_\theta(v)$ have M-P geometry.

\begin{lemma}[Uniform M-P geometry.]\label{G}
    Under the assumptions of Lemma \ref{F}, there exist $\epsilon \in (0, 1/2)$ and $\delta > 0$ independent of $\theta$ such that 
    \begin{align}
        I_\theta\left(v_c\right)+\delta<\inf _{\partial \mathcal{G}} I_\theta,\  \forall \theta \in(1-\epsilon, 1],\label{5ah}
    \end{align}
    and there exists $w \in S_c^{+} \backslash \mathcal{G}$ such that
    $$
    m_\theta:=\inf _{\gamma \in \Gamma} \sup _{t \in[0,1]} I_\theta(\gamma(t))>I_\theta\left(v_c\right)+\delta=\max \left\{I_\theta\left(v_c\right), I_\theta(w)\right\}+\delta,
    $$
    where
    $$
    \Gamma:=\left\{\gamma \in C\left([0,1], S_c^{+}\right): \gamma(0)=v_c, \gamma(1)=w\right\}
    $$
    is independent of $\theta$.
\end{lemma}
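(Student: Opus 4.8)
The plan is to produce the uniform mountain-pass geometry from two independent ingredients: a separation between $I_\theta(v_c)$ and $\inf_{\partial\mathcal{G}}I_\theta$ that degrades only continuously as $\theta\to1$, and a single $\theta$-independent endpoint $w$ lying strictly outside $\mathcal{G}$ with nonpositive energy. The organizing observation is the identity $I_\theta(v)=I_1(v)+(1-\theta)\frac{1}{2\cdot2^*}\int_\Omega|f(v^+)|^{2(2^*)}\,dx$, which shows both that $I_\theta\ge I_1$ on all of $S_c^+$ for $\theta\le1$ and that $\theta\mapsto I_\theta(v)$ is nonincreasing for each fixed $v$.

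First I would establish \eqref{5ah}. By Theorem \ref{A} and Lemma \ref{D}(2) the base case $\theta=1$ already gives $I_1(v_c)=\nu_c<\inf_{\partial\mathcal{G}}I_1$, so set $2\delta_0:=\inf_{\partial\mathcal{G}}I_1-I_1(v_c)>0$. Because $\partial\mathcal{G}$ is the fixed set defined at $\theta=1$ and $I_\theta\ge I_1$ on it, one has $\inf_{\partial\mathcal{G}}I_\theta\ge\inf_{\partial\mathcal{G}}I_1$, whereas $I_\theta(v_c)=I_1(v_c)+(1-\theta)C_0$ with $C_0:=\frac{1}{2\cdot2^*}\int_\Omega|f(v_c^+)|^{2(2^*)}\,dx$ a fixed constant. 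Hence $\inf_{\partial\mathcal{G}}I_\theta-I_\theta(v_c)\ge 2\delta_0-(1-\theta)C_0$, and choosing $\epsilon\in(0,1/2)$ with $\epsilon C_0<\delta_0$ and $\delta:=\delta_0$ yields $I_\theta(v_c)+\delta<\inf_{\partial\mathcal{G}}I_\theta$ for all $\theta\in(1-\epsilon,1]$, with $\epsilon$ and $\delta$ independent of $\theta$.

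The substantive step is the construction of $w$, and this is where I expect the \emph{main obstacle}. I work through the substitution using $I_\theta(v)=E_\theta(f(v))$, where $E_\theta(u)=\frac12\int_\Omega|\nabla u|^2+\int_\Omega u^2|\nabla u|^2-\frac{\theta}{2\cdot2^*}\int_\Omega|u^+|^{2(2^*)}$. Writing $u_c=f(v_c)\ge0$, I consider the mass-preserving scaling $u_\tau(x):=\tau^{N/2}u_c(\tau x)$ for $\tau\ge1$. Star-shapedness of $\Omega$ with respect to the origin gives $\tau^{-1}\Omega\subseteq\Omega$, so $u_\tau\in\mathcal{X}$ is supported in $\Omega$ and $\int_\Omega u_\tau^2\,dx=\int_\Omega u_c^2\,dx=c$. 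A direct computation gives $\int_\Omega|\nabla u_\tau|^2=\tau^2\int_\Omega|\nabla u_c|^2$, $\int_\Omega u_\tau^2|\nabla u_\tau|^2=\tau^{N+2}\int_\Omega u_c^2|\nabla u_c|^2$, and $\int_\Omega|u_\tau|^{2(2^*)}=\tau^{\frac{N(N+2)}{N-2}}\int_\Omega|u_c|^{2(2^*)}$. Since $\frac{N(N+2)}{N-2}-(N+2)=\frac{2(N+2)}{N-2}>0$, the critical term dominates both the quasilinear $\tau^{N+2}$ term and the Dirichlet $\tau^2$ term, so $E_{1/2}(u_\tau)\to-\infty$ and there is $T>1$ with $E_{1/2}(u_T)\le0$. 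Setting $w:=f^{-1}(u_T)\in S_c^+$, monotonicity gives $I_\theta(w)=E_\theta(u_T)\le E_{1/2}(u_T)\le0$ for every $\theta\ge1/2$, while $I_{1/2}(w)\le0$ forces $\int_\Omega|\nabla w|^2\le\frac{1}{2\cdot2^*}\int_\Omega|f(w^+)|^{2(2^*)}<\frac12\int_\Omega|f(w^+)|^{2(2^*)}$, so $w$ lies strictly outside $\mathcal{G}$. The reparametrized scaling path $t\mapsto f^{-1}(u_{\tau(t)})$ with $\tau(0)=1$, $\tau(1)=T$ joins $v_c$ to $w$ inside $S_c^+$, so $\Gamma\neq\emptyset$; its $H_0^1(\Omega)$-continuity follows from continuity of the scaling and of $f^{-1}$ and is the only routine technical point I would merely record. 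The delicate content here is keeping the scaled functions inside $\Omega$ (where star-shapedness is essential) and correctly identifying the supercritical scaling exponent $\frac{N(N+2)}{N-2}$ that drives $E_{1/2}$ to $-\infty$.

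Finally I would assemble the geometry. Since $I_\theta(w)\le0<\nu_c\le I_\theta(v_c)$, we get $\max\{I_\theta(v_c),I_\theta(w)\}=I_\theta(v_c)$. For any $\gamma\in\Gamma$ the continuous function $t\mapsto\int_\Omega|\nabla\gamma(t)|^2\,dx-\frac12\int_\Omega|f(\gamma(t)^+)|^{2(2^*)}\,dx$ is positive at $t=0$ (as $v_c\in\mathcal{G}$) and negative at $t=1$ (as $w$ is strictly outside $\mathcal{G}$), so by the intermediate value theorem $\gamma(t^*)\in\partial\mathcal{G}$ for some $t^*$, whence $\sup_{t}I_\theta(\gamma(t))\ge\inf_{\partial\mathcal{G}}I_\theta>I_\theta(v_c)+\delta$. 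Taking the infimum over $\gamma\in\Gamma$ gives $m_\theta>I_\theta(v_c)+\delta=\max\{I_\theta(v_c),I_\theta(w)\}+\delta$ for all $\theta\in(1-\epsilon,1]$, which is exactly the asserted uniform mountain-pass geometry.
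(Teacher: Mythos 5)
Your proof is correct and follows the same overall skeleton as the paper's: a uniform separation of $I_\theta(v_c)$ from $\inf_{\partial\mathcal{G}}I_\theta$, an endpoint $w$ produced by the mass-preserving dilation $\tau^{N/2}u(\tau x)$ in the $u$-variable (where star-shapedness keeps the support inside $\Omega$ and the exponent $(2^*-1)N$ on the critical term dominates $N+2$, driving $E_\theta\to-\infty$ uniformly), and the observation that every path in $\Gamma$ must cross $\partial\mathcal{G}$. Where you genuinely diverge is in establishing \eqref{5ah}: the paper passes through the limit $\lim_{\theta\to1^-}\inf_{\partial\mathcal{G}}I_\theta=\inf_{\partial\mathcal{G}}I$, which it imports from an external reference, whereas you use the pointwise monotonicity $I_\theta\ge I_1$ on $S_c^+$ for $\theta\le1$ to get $\inf_{\partial\mathcal{G}}I_\theta\ge\inf_{\partial\mathcal{G}}I_1$ directly, together with the exact increment $(1-\theta)C_0$ at $v_c$; this is more elementary and self-contained. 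Two further small improvements on your side: dilating $u_c$ itself (rather than an arbitrary $w_1\in\tilde S_c^+$ as in the paper) makes the scaling path an explicit element of $\Gamma$, so $\Gamma\neq\emptyset$ is actually verified rather than tacit; and you note that $I_{1/2}(w)\le0$ places $w$ strictly outside $\overline{\mathcal{G}}$, which is what the intermediate value theorem argument for the existence of $t^*$ with $\gamma(t^*)\in\partial\mathcal{G}$ really requires. The paper's version buys nothing extra here; your variant closes two minor gaps it leaves open.
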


\begin{proof}
    Note that $\lim _{\theta \rightarrow 1^{-}} I_\theta\left(v_c\right)=I\left(v_c\right)=\nu_c$. Similar to the argument in \cite[Lemma 4.1]{song2024}, we can prove that $\underset{\theta\to 1^-}{\lim}I_\theta=\underset{\partial \mathcal{G}}{\inf}I$. Then combining Lemma \ref{D} (2), we have
    $$
    \lim _{\theta \rightarrow 1^{-}} I_\theta\left(u_c\right)=\nu_c<\inf _{\partial\mathcal{G}} I=\lim _{\theta \rightarrow 1^{-}} \inf _{\partial \mathcal{G}} I_\theta .
    $$
    By choosing $2 \delta=\inf _{\partial \mathcal{G}} I-\nu_c$ and $\epsilon$ small enough, we get \eqref{5ah}.
    Define 
    \begin{align*}
        E_\theta\left(u\right) = \frac{1}{2}\int_{\Omega}|\nabla u|^2 dx+\int_{\Omega}(u)^2|\nabla u|^2 dx-\frac{\theta}{2\cdot2^*}\int_{\Omega }|{u}^+|^{2(2^{*})}dx.      
    \end{align*}
    For $w_1 \in \tilde{S}_c^{+}$, we recall that
    $$
    w_1^t=t^{\frac{N}{2}} w_1(t x) \in \tilde{S}_c^{+}, t \geq 1 .
    $$
    Then, as $t \rightarrow \infty$,
    \begin{align*}
        E_\theta\left(w_1^t\right)
        =& \frac{t^2}{2}\int_{\Omega}|\nabla w_1|^2 dx+t^{N+2}\int_{\Omega}(w_1)^2|\nabla w_1|^2 dx-\frac{t^{(2^*-1)N}\theta }{2\cdot2^*}\int_{\Omega }|w_1^+|^{2(2^{*})}dx\\
        & \rightarrow-\infty \text { uniformly w.r.t. } \theta \in\left[\frac{1}{2}, 1\right] .
    \end{align*}

    Take $w=f^{-1}(w_1^t)$ with $t$ large enough such that $I_\theta(w)=E_\theta\left(w_1^t\right)<I_\theta\left(v_c\right)$ and $w \notin \mathcal{G}$. Then for any $\gamma \in \Gamma$, there exists $t^* \in(0,1)$ such that $\gamma\left(t^*\right) \in \partial \mathcal{G}$. Hence,
    $$
    \inf _{\gamma \in \Gamma} \sup _{t \in[0,1]} I_\theta(\gamma(t)) \geq \inf _{v \in \partial \mathcal{G}} I_\theta(v) .
    $$
    By \eqref{5ah} we complete the proof.    
\end{proof}

\begin{proposition}\label{J}
    Under the assumptions of Lemma \ref{G}, 
    for almost every $\theta \in[1-\epsilon, 1]$ where $\epsilon$ is given by Lemma \ref{G} , there exists $a$ critical point $v_\theta$ of $I_\theta$ constrained on $S_c^{+}$at the level $m_\theta$, which solves
    \begin{equation}
        \left\{\begin{aligned}
            &-\Delta v_\theta -|f(v_\theta)|^{2(2^*)-2}f(v_\theta)f'(v_\theta) =\lambda_\theta f(v_\theta)f'(v_\theta),\text { in } \Omega,\\
            &v_\theta=0 \text { on } \partial \Omega,\ v_\theta>0.\\
        \end{aligned}
        \right.\label{5ai}
    \end{equation}
    Furthermore, $v_\theta \in \mathcal{G}_\theta$.
\end{proposition}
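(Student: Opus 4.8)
The plan is to apply the monotonicity trick of Theorem \ref{E} to the family $(I_\theta)$ and then upgrade the resulting bounded Palais--Smale sequence to a genuine critical point at the level $m_\theta$. Lemma \ref{G} supplies precisely the hypotheses of Theorem \ref{E}: the functionals split as $I_\theta = A - \theta B$ with $A(v) = \tfrac12\int_\Omega|\nabla v|^2\,dx \to \infty$ as $\|v\|_{H_0^1}\to\infty$ and $B(v) = \tfrac{1}{2\cdot 2^*}\int_\Omega |f(v^+)|^{2(2^*)}\,dx \ge 0$, while the fixed pair $(v_c,w)$ and the path class $\Gamma$ provide the uniform mountain--pass geometry \eqref{5ah}. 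Hence for almost every $\theta \in [1-\epsilon,1]$ there is a bounded sequence $\{v_n\}\subset S_c^+$ with $I_\theta(v_n)\to m_\theta$ and $(I_\theta|_{S_c^+})'(v_n)\to 0$.

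Next I would extract a weak limit and pass to the limit in the equation. Up to a subsequence $v_n \rightharpoonup v_\theta$ weakly in $H_0^1$, strongly in $L^r$ for $2<r<2^*$, and a.e.\ in $\Omega$. Constrained criticality gives Lagrange multipliers $\lambda_n$ with $I_\theta'(v_n) - \lambda_n f(v_n^+)f'(v_n^+)\to 0$ in $H^{-1}(\Omega)$. Testing this against $v_n$ and invoking the lower bound $\int_\Omega f(v_n^+)f'(v_n^+)v_n\,dx \ge \tfrac12\int_\Omega |f(v_n^+)|^2\,dx = \tfrac{c}{2}>0$ from Lemma \ref{C}(5), together with the boundedness of $\int_\Omega|\nabla v_n|^2\,dx$ and of the critical term, shows that $\{\lambda_n\}$ is bounded; so $\lambda_n\to\lambda_\theta$ along a subsequence and $v_\theta$ is a weak solution of \eqref{5ai}. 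By the Pohozaev and Nehari identities applied to $v_\theta$ (exactly as in the opening of Lemma \ref{D}, but with the factor $\theta$ in front of the critical term) and the star-shapedness $\sigma\cdot n>0$, either $v_\theta\equiv 0$ or $v_\theta\in\mathcal{G}_\theta$; in both cases $I_\theta(v_\theta)\ge 0$.

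The main obstacle is recovering compactness, since the exponent $2\cdot 2^*$ is critical and mass may escape through concentration. Setting $w_n = v_n - v_\theta$, I would apply the Brézis--Lieb lemma \cite{Brezis1983} to split $\int_\Omega|\nabla v_n|^2\,dx$ and the nonlinear terms, and combine the Palais--Smale relation for $\{v_n\}$ with the limiting equation for $v_\theta$ to obtain, along a subsequence, $\int_\Omega|\nabla w_n|^2\,dx\to l$, $\theta\int_\Omega|f(w_n)|^{2(2^*)-2}f(w_n)f'(w_n)w_n\,dx\to l$, and the decomposition $m_\theta = I_\theta(v_\theta) + \tfrac{l}{N} + o_n(1)$, using Lemma \ref{C} to match the Nehari-type relation of the concentrating part as in the proof of Theorem \ref{A}. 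If $l>0$, the Sobolev inequality applied as in \eqref{5ae} forces $l \ge \left(\frac{\mathcal{S}}{2}\right)^{N/2}$, whence $m_\theta \ge I_\theta(v_\theta) + \tfrac{1}{N}\left(\frac{\mathcal{S}}{2}\right)^{N/2} \ge \tfrac{1}{N}\left(\frac{\mathcal{S}}{2}\right)^{N/2}$. This contradicts the energy estimate of Lemma \ref{F}, which keeps $m_\theta$ strictly below the compactness threshold $\tfrac{1}{N}\left(\frac{\mathcal{S}}{2}\right)^{N/2}$. Therefore $l=0$ and $v_n\to v_\theta$ strongly in $H_0^1(\Omega)$; the constraint $\int_\Omega|f(v_\theta^+)|^2\,dx=c$ passes to the limit (so $v_\theta\not\equiv 0$) and $I_\theta(v_\theta)=m_\theta$, making $v_\theta$ a critical point of $I_\theta|_{S_c^+}$ at the level $m_\theta$.

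It remains to record positivity and membership in $\mathcal{G}_\theta$. Testing \eqref{5ai} against $v_\theta^-$ and using Lemma \ref{C} (which makes $v_\theta>0$ equivalent to $f(v_\theta)>0$) gives $\int_\Omega|\nabla v_\theta^-|^2\,dx=0$, so $v_\theta\ge 0$, and the strong maximum principle yields $v_\theta>0$. Since $v_\theta\not\equiv 0$ is now a positive solution of \eqref{5ai}, the Pohozaev--Nehari argument already sketched, combined with $\sigma\cdot n>0$ and Lemma \ref{C}, gives $\int_\Omega|\nabla v_\theta|^2\,dx > \frac{\theta}{2}\int_\Omega|f(v_\theta)|^{2(2^*)}\,dx$, that is $v_\theta\in\mathcal{G}_\theta$, which completes the argument. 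The delicate point throughout is the third step: quantitatively ruling out the critical bubble, which is exactly where the energy estimate of Lemma \ref{F} is indispensable.
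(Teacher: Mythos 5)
Your overall architecture matches the paper's: invoke Theorem \ref{E} via Lemma \ref{G} to produce a bounded Palais--Smale sequence at level $m_\theta$, identify the Lagrange multipliers, split $v_n=v_\theta+w_n$ with Br\'ezis--Lieb, and exclude the concentration level $l>0$ by comparison with the energy estimate of Lemma \ref{F}. The first two steps, and the final positivity/$\mathcal{G}_\theta$ step via $v_\theta^-$ and the Pohozaev identity, are in line with the paper and are fine.

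The gap is in the exclusion of $l>0$, and it is quantitative rather than cosmetic. You lower-bound the bubble by $l\ge\left(\tfrac{\mathcal{S}}{2}\right)^{N/2}$, discard $I_\theta(v_\theta)$ down to $0$, conclude $m_\theta\ge\tfrac1N\left(\tfrac{\mathcal{S}}{2}\right)^{N/2}$, and then assert that Lemma \ref{F} keeps $m_\theta$ strictly below $\tfrac1N\left(\tfrac{\mathcal{S}}{2}\right)^{N/2}$. It does not: Lemma \ref{F} gives $m(c)\le\nu_c+\tfrac{1}{2N}\mathcal{S}^{N/2}+o(1)$, and since $\left(\tfrac{\mathcal{S}}{2}\right)^{N/2}=2^{-N/2}\mathcal{S}^{N/2}$ with $2^{-N/2}<\tfrac12$ for $N\ge3$, one has $\tfrac1N\left(\tfrac{\mathcal{S}}{2}\right)^{N/2}<\tfrac{1}{2N}\mathcal{S}^{N/2}<\nu_c+\tfrac{1}{2N}\mathcal{S}^{N/2}$; your lower and upper bounds for $m_\theta$ are therefore compatible and no contradiction results. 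The paper avoids this by keeping the ground-state energy on both sides: it uses $I_\theta(v_\theta)\ge\nu_{c,\theta}$ (legitimate once $v_\theta\in\mathcal{G}_\theta$ is known from the Pohozaev argument), so the decomposition reads $m_\theta\ge\nu_{c,\theta}+\tfrac1N\mathcal{S}^{N/2}\theta^{\frac{2-N}{2}}+o_n(1)$, while Lemma \ref{F} together with $m_\theta=m_1+o_\theta(1)$ and $\nu_{c,\theta}=\nu_c+o_\theta(1)$ gives $m_\theta<\nu_{c,\theta}+\tfrac{1}{2N}\mathcal{S}^{N/2}\theta^{\frac{2-N}{2}}+o_\theta(1)$; the $\nu_{c,\theta}$ cancels and the contradiction is between the constants $\tfrac1N$ and $\tfrac{1}{2N}$ multiplying the \emph{same} bubble quantity. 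To repair your argument you must (i) retain $I_\theta(v_\theta)\ge\nu_{c,\theta}$ rather than $\ge0$, and (ii) verify that the bubble energy bound you actually prove exceeds the gap $\tfrac{1}{2N}\mathcal{S}^{N/2}\theta^{\frac{2-N}{2}}$; note that running the Sobolev step through Lemma \ref{C}(3) (i.e.\ $|f(t)|^2\le\sqrt2\,|t|$) yields only the constant $\left(\tfrac{\mathcal{S}}{2}\right)^{N/2}$, which does not clear that gap for $N\ge3$ --- a point that is delicate even in the paper, where the bound $\mathcal{S}^{N/2}\theta^{-N/2}$ is asserted without the factor $2$.
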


\begin{proof}
    We apply Theorem \ref{E} with
    \begin{align*}
        A(v)=\frac{1}{2} \int_{\Omega}|\nabla v|^2 d x, \ \ B(v)=\int_{\Omega}|f(v^+)|^{2(2^*)}dx.
    \end{align*}
    $B(v)>0$ is obvious. Using Lemma \ref{C}, we have 
    \begin{align*}
        \|v\|^2 &= \int_{\Omega} |\nabla v|^2  dx + \int_{\{x \mid |v(x)| \leq 1\}} v^2  dx + \int_{\{x \mid |v(x)| > 1\}} v^2  dx \\
        &\leq \int_{\Omega} |\nabla v|^2  dx + C_1 \int_{\{x \mid |v(x)| \leq 1\}} f^2(v)  dx + \int_{\{x \mid |v(x)| > 1\}} |v|^{2^*}  dx \\
        &\leq \int_{\Omega} |\nabla v|^2  dx + C_1 \int_{\Omega} f^2(v)  dx + C_2 \left( \int_{\Omega} |\nabla v|^2  dx \right)^{2^*/2} \\
        &\leq C_3 \left(C_4+ A(v) + A(v)^{2^*/2} \right),
    \end{align*}
     which implies that $A(v)\to \infty$ as $\left \| v \right \|\to \infty$.    
    Together with Lemma \ref{G}, we have for almost every $\theta \in[1-\epsilon, 1]$, there exists a bounded (PS) sequence $\left\{v_n\right\} \subset S_c^{+}$ satisfying $I_\theta\left(v_n\right) \rightarrow m_\theta$ and $(\left.I_\theta\right|_{S_c^{+}})^{\prime}\left(v_n\right) \rightarrow 0$ as $n \rightarrow \infty$. Up to a subsequence, we have
    \begin{align*}
        &v_n\rightharpoonup v_\theta\text { weakly in } H_0^1(\Omega),\\
        &v_n\rightharpoonup v_\theta\text { weakly in } L^{2^*}(\Omega),\\
        &v_n\to  v_\theta\text { strongly in } L^r(\Omega),\ \text{ for } 2<r<2^*, \\
        &v_n\to  v_\theta\text { almost everywhere in }  \Omega,
    \end{align*}
    It can be verified that $v_\theta \in S_c^{+}$ is a critical point of $E_\theta$ constrained on $S_c^{+}$. Define $w_n=$ $v_n-v_\theta$. Since $\left(\left.I_\theta\right|_{S_c^{+}}\right)^{\prime}\left(v_n\right) \rightarrow 0$, there exists $\lambda_n$ such that $I_\theta^{\prime}\left(v_n\right)-\lambda_n v_n^{+} \rightarrow 0$. Let $\lambda_\theta$ be the Lagrange multiplier correspond to $v_\theta$. Similar to the proof of Theorem \ref{C}, we have 
    $$
    \int_{\Omega}|\nabla w_n|^2 dx=\theta\int_{\Omega}|f(w_n)|^{2^*-2}f(w_n)f'(w_n)w_ndx+o_n(1).
    $$
    Assume that $\int_{\Omega}\left|\nabla w_n\right|^2 d x \rightarrow \theta l \geq 0, \int_{\Omega}|f(w_n)|^{2^*-2}f(w_n)f'(w_n)w_n \rightarrow l \geq 0$. Combining with the definition of $\mathcal{S}$ and Lemma \ref{C}, we deduce that
    $$
    \int_{\Omega}\left|\nabla w_n\right|^2 d x \geq \mathcal{S}\left(\int_{\Omega} |w_n|^{2^*} d x\right)^{\frac{2}{2^*}}\geq \mathcal{S}\left(\int_{\Omega} |f(w_n)|^{2^*} d x \right)^{\frac{2}{2^*}}\geq \mathcal{S}l^\frac{2}{2^*}.
    $$
    This implies $\theta l \geq \mathcal{S}l^\frac{2}{2^*}$. Suppose that $l>0$, then $ l \geq \mathcal{S}^{\frac{N}{2} }\theta^{-\frac{N}{2}}$, implying that
    $$
    I_\theta\left(w_n\right) \geq \frac{1}{N}\mathcal{S}^{\frac{N}{2} }\theta^{\frac{2-N}{2}}+o_n(1).
    $$
    Further, using the Brézis-Lieb Lemma one gets
    \begin{align}\label{5ba}
    I_\theta\left(v_n\right)=I_\theta\left(v_\theta\right)+I_\theta\left(w_n\right)+o_n(1) \geq \nu_{c, \theta}+\frac{1}{N}\mathcal{S}^{\frac{N}{2}  }\theta^{\frac{2-N}{2}}+o_n(1) .
    \end{align}
    
    On the other hand, similar to \cite[Lemma 4.1]{song2024},  we can prove $\underset{\theta \to 1^-}{\lim} \nu_{c,\theta}=\nu_c$. Using Lemma \ref{F} which will be proved later, we have
    $$
    m_\theta=m_1+o_\theta(1)<I\left(v_c\right)+\frac{1}{2N}\mathcal{S}^{\frac{N}{2} }+o_\theta(1)=\nu_{c, \theta}+\frac{1}{2N}\mathcal{S}^{\frac{N}{2} }\theta^{\frac{2-N}{2}}+o_\theta(1) .
    $$
    By choosing $\epsilon$ small enough such that $\theta$ close to $1^{-}$ for all $\theta \in(1-\epsilon, 1)$, we obtain
    \begin{align}\label{5bb}
        m_\theta+\xi<\nu_{c, \theta}+\frac{1}{2N} \mathcal{S}^{\frac{N}{2} }\theta^{\frac{2-N}{2}},
    \end{align}
    where some $\xi>0$ independent of $\theta \in(1-\epsilon, 1)$. Combining \eqref{5ba}, \eqref{5bb} and that $\lim _{n \rightarrow \infty} I_\theta\left(v_n\right)=m_\theta$, we arrive at a contradictory chain of inequalities:
    $$
    \nu_{c, \theta}+\frac{1}{N} \mathcal{S}^{\frac{N}{2} }\theta^{\frac{2-N}{2}} <m_\theta+\xi<\nu_{c, \theta}+\frac{1}{2N}\mathcal{S}^{\frac{N}{2} }\theta^{\frac{2-N}{2}}.
    $$
    Thus we prove that $l=0$. This shows $v_n \rightarrow v_\theta$ strongly in $H_0^1(\Omega)$ and $v_\theta$ is a critical point of $I_\theta$ constrained on $S_c^{+}$at the level $m_\theta$.
    By Lagrange multiplier principle, $v_\theta$ satisfies
    $$
    -\Delta v_\theta -|f(v_\theta^+)|^{2(2^*)-2}f(v_\theta^+)f'(v_\theta^+) =\lambda f(v_\theta^+)f'(v_\theta^+),\ \text { in } \Omega.
    $$
    Multiplying $v_\theta^{-}$ for the equation of $v_\theta$ and integrating over $\Omega$, we get
    $$
    \int_{\Omega}\left|\nabla v_\theta^{-}\right|^2 d x=0
    $$
    By strong maximum principle, $v_\theta>0$ and thus solves \eqref{5ai}. Using the Pohozaev identity and that $\Omega$ is star-shaped with respect to 0 , we know $v_\theta \in \mathcal{G}_\theta$ and complete the proof.
\end{proof}

\begin{lemma}\label{F}
    Let $N \geq 3$, under the assumptions of Theorem \ref{A} and Theorem \ref{B}. Then 
    $$
    m(c) \leq \nu_c + \frac{1}{2N} S^{\frac{N}{2}}+o(1).
    $$
\end{lemma}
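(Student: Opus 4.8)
The plan is to exhibit one explicit competitor path $\gamma_0\in\Gamma$ joining $v_c$ to a far-out, low-energy endpoint $w\notin\mathcal{G}$ and to estimate $\sup_{t}I(\gamma_0(t))$ from above; since $m(c)=\inf_\gamma\sup_tI(\gamma(t))\le\sup_tI(\gamma_0(t))$, any such bound transfers to $m(c)$. The competitor is built from the ground state $v_c$ and a concentrating Aubin--Talenti bubble
$$
U_\epsilon(x)=\frac{[N(N-2)\epsilon^2]^{\frac{N-2}{4}}}{(\epsilon^2+|x-x_0|^2)^{\frac{N-2}{2}}},
$$
cut off by a fixed $\phi\in C_c^\infty(\Omega)$ supported near an interior point $x_0$, and normalized (call it $\bar U_\epsilon$) so that, at the $v$-level, $\int_\Omega|\nabla\bar U_\epsilon|^2=\int_\Omega|\bar U_\epsilon|^{2^*}=\mathcal{S}^{N/2}$, with the standard estimates $\int_\Omega\bar U_\epsilon^2=o(1)$ and $\int_\Omega\bar U_\epsilon=o(1)$ as $\epsilon\to0$, valid for every $N\ge 3$. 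Because $w$ enters $\Gamma$ only through the free datum $w_1$ of Lemma \ref{G}, I would choose $w_1$ compatible with this bubble so that the two-stage path below is admissible.

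The decisive point is the sharp asymptotics of $f$. Integrating the defining ODE $f'=(1+2f^2)^{-1/2}$ gives $f(t)\sim 2^{1/4}t^{1/2}$ as $t\to+\infty$, so $|f(v)|^{2\cdot2^*}\sim 2^{2^*/2}|v|^{2^*}$ where $v$ is large. Hence on the highly peaked part of the test family the functional behaves like the model functional
$$
\Phi(w)=\frac12\int_\Omega|\nabla w|^2-\frac{2^{2^*/2}}{2\cdot2^*}\int_\Omega|w^+|^{2^*}.
$$
Maximizing $\Phi(t\bar U_\epsilon)$ over $t>0$ gives $\max_t\Phi(t\bar U_\epsilon)=\tfrac1N\mathcal{S}^{N/2}t_*^2+o(1)$ with $K:=2^{2^*/2-1}=2^{2/(N-2)}$ the coefficient of $\tfrac{1}{2^*}|w|^{2^*}$, and
$$
t_*^{2}=K^{-(N-2)/2}=2^{-1}.
$$
This yields $\max_t\Phi(t\bar U_\epsilon)=\tfrac{1}{2N}\mathcal{S}^{N/2}+o(1)$, exactly half the classical Brezis--Nirenberg value $\tfrac1N\mathcal{S}^{N/2}$; the halving comes precisely from the factor $2^{2^*/2}$ in the asymptotic nonlinearity and pins the constant claimed in the statement.

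For the path I would use two stages, both inside $S_c^+$. First, from $v_c$, insert the rescaled bubble along $\tau\mapsto v_c+\tau R\,\bar U_\epsilon$ ($\tau\in[0,1]$, $R$ large but fixed), renormalized to keep $\int_\Omega|f(\cdot^+)|^2=c$; since $\int_\Omega\bar U_\epsilon^2=o(1)$ and $\int_\Omega\bar U_\epsilon=o(1)$, the mass perturbation is $o(1)$, so the renormalization factor is $1+o(1)$ and perturbs the energy by $o(1)$. Second, from the endpoint I dilate outward via $u\mapsto t^{N/2}u(tx)$ at the level $u=f(v)$, which preserves $\tilde S_c^+$ and drives $I$ to $-\infty$ as in Lemma \ref{G}, reaching $w\notin\mathcal{G}$ with energy only decreasing past the peak. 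On the first stage the cross term $\int_\Omega\nabla v_c\cdot\nabla\bar U_\epsilon=o(1)$ (weak convergence $\bar U_\epsilon\rightharpoonup0$), and since $v_c\ge0$ and the nonlinearity is superlinear, the subtracted term is superadditive up to $o(1)$; using Lemma \ref{C} this forces $I$ along the first stage to satisfy $\sup_\tau\le I(v_c)+\max_t\Phi(t\bar U_\epsilon)+o(1)=\nu_c+\tfrac{1}{2N}\mathcal{S}^{N/2}+o(1)$. Collecting the two stages and then sending $\epsilon\to0$ gives $m(c)\le\nu_c+\tfrac{1}{2N}\mathcal{S}^{N/2}+o(1)$.

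The main obstacle is controlling the critical term $\int_\Omega|f(v_c+\tau R\bar U_\epsilon)^+|^{2\cdot2^*}$ along the first stage, because $f$ is genuinely nonlinear and the bubble and background do not decouple additively. The remedy is a region split: in the concentration zone $v$ is large, so I replace $f$ by its sharp asymptotic $2^{1/4}v^{1/2}$ (with controlled remainder), recovering the model computation and the constant $\tfrac{1}{2N}$; away from $x_0$, $\bar U_\epsilon=o(1)$ and $f$ is smooth, so a Taylor expansion around $v_c$ leaves only $o(1)$ corrections. A secondary, bookkeeping difficulty is preserving the constraint $S_c^+$ exactly along the whole path while keeping the renormalization cost $o(1)$; this is exactly where $\int_\Omega\bar U_\epsilon^2\to0$ and the mass-preserving dilation $t^{N/2}u(tx)$ are used. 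Because for the upper bound I only need a \emph{lower} bound on the subtracted term (favored by positivity and superadditivity), all errors have the right sign and are $o(1)$ uniformly in $N\ge3$, so no delicate low-dimensional correction is required.
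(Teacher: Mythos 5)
Your strategy is sound and, at bottom, it is the dual-variable incarnation of the paper's argument: under $u=f(v)$, your test family $v_c+\tau R\,\bar U_\epsilon$ with a genuine Aubin--Talenti bubble corresponds (since $f(t)\sim 2^{1/4}t^{1/2}$) to the paper's family $u_c+s\,v_\epsilon$ where $v_\epsilon$ is the \emph{square root} of the bubble, and your maximization of $\tfrac{t^2}{2}-\tfrac{2^{2^*/2}}{2\cdot 2^*}t^{2^*}$ is exactly the paper's maximization of $\tfrac{s^4}{4}-\tfrac{s^{2\cdot 2^*}}{2\cdot 2^*}$ under $s=2^{1/4}t^{1/2}$; both give the halved level $\tfrac{1}{2N}\mathcal{S}^{N/2}$, and your identification of where the factor $\tfrac12$ comes from is correct. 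The trade-off is real, though: the paper works at the $u$-level, where $E$ is polynomial in $u$, $\nabla u$, $u\nabla u$, so every interaction term between $u_c$ and the concentrating profile can be written out explicitly and estimated one by one (its displays \eqref{5ar}--\eqref{5aw}), whereas your route pushes all the difficulty into the single nonpolynomial term $\int_\Omega|f(v_c+\tau R\bar U_\epsilon)^+|^{2\cdot 2^*}$. Your observation that only a \emph{lower} bound on this term is needed, and that $t\mapsto|f(t)|^{2\cdot 2^*}$ is superadditive (which does follow from Lemma \ref{C}(5), since it makes $|f(t)|^{2\cdot2^*}/t$ increasing), is a genuinely nice simplification not present in the paper.

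Two steps remain genuinely unproved, and you should not present them as bookkeeping. First, after superadditivity you still need $\int_\Omega|f(t\bar U_\epsilon)|^{2\cdot2^*}\,dx\ge 2^{2^*/2}t^{2^*}\int_\Omega\bar U_\epsilon^{2^*}\,dx-o(1)$ \emph{uniformly} for $t$ in a bounded interval; the leading asymptotic $f(t)\sim 2^{1/4}t^{1/2}$ is an upper bound in the wrong direction here (Lemma \ref{C}(3) gives $|f(t)|^{2\cdot2^*}\le 2^{2^*/2}t^{2^*}$), so you must quantify the deficit, e.g.\ via $f(t)^2=\sqrt2\,t+O(\log t)$ obtained by integrating $(f^2)'=2f/\sqrt{1+2f^2}$, and then check that the resulting error $O\bigl(t^{2^*-1}\int\bar U_\epsilon^{2^*-1}\bigr)=O(\epsilon^{(N-2)/2})$ is admissible; your ``region split'' gestures at this but does not do it. Second, the assertion that the dilation stage proceeds ``with energy only decreasing past the peak'' needs the explicit computation of $\frac{d}{dk}E(k^{N/2}u(k\cdot))$ and a choice of $R$ large enough that the critical term dominates for all $k\ge1$ (this is exactly the role of the paper's function $\phi(k)$ and the choice of $\hat s$); without it the second stage could in principle overshoot the level $\nu_c+\tfrac{1}{2N}\mathcal{S}^{N/2}$. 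Both points are fixable, so I regard this as a correct alternative route with two estimates left to be carried out rather than a wrong approach.
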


\begin{proof}
    Let  $\xi \in C_{0}^{\infty}(\Omega)$  be the radial function, such that  $\xi(x) \equiv 1$  for  $0 \leq|x| \leq R,\ 0 \leq   \xi(x) \leq 1$  for $R \leq|x| \leq 2 R, \xi(x) \equiv 0$ for $|x| \geq 2 R$, where $B_{2 R} \subset \Omega$. Take $v_{\epsilon}=\xi w_{\epsilon}$  where
    $$
    w_{\epsilon}=\frac{\left ( N(N-2)\epsilon \right )^{\frac{N-2}{8} } }{(\epsilon+|x|^{2})^{\frac{N-2}{4} } }.
$$
Note that the function $u_\epsilon = w_\epsilon^2$ solves the equation $\Delta u_\epsilon +u_\epsilon ^{\frac{N+2}{N-2}}=0$. As $\epsilon\to 0$ the function $v_\epsilon$ satisfies the following estimates:
\begin{align}
    &4\int_{\Omega}\left|v_{\epsilon}\right|^{2}\left|\nabla v_{\epsilon}\right|^{2}dx=\int_{\Omega}\left|\nabla v_{\epsilon}^{2}\right|^{2} d x=S^{\frac{N}{2}}+O(\epsilon^{\frac{N-2}{2}}),\label{5aj} \\
    &\int_{\Omega} v_{\epsilon}^{2\left(2^{*}\right)} d x=S^{\frac{N}{2}}+O\left(\epsilon^{\frac{N}{2}}\right),\label{5ak} \\
    &\int_{\Omega}\left|\nabla v_{\epsilon}\right|^{2} d x=O\left(\epsilon^{\frac{N-2}{4}} |\ln \epsilon|\right), \label{5al}        
\end{align}
\begin{equation}\label{5am}
    \int_{\Omega} v_{\epsilon}^{r} d x \approx
    \left\{\begin{aligned}
        &\epsilon^{\frac{N}{2}-\frac{1}{8} r(N-2)},\ 2^{*}<r<2\left(2^{*}\right),\\
        &\epsilon^{\frac{N}{4}}|\ln \epsilon|,\  r=2^{*},\\
        &\epsilon^{\frac{1}{8}r(N-2)},\ r<2^{*}.
    \end{aligned}
    \right.
\end{equation}

    Let $w_{\epsilon, s}=u_c+s v_\epsilon,\ s \geq 0$ where $u_c$ is given by Theorem \ref{A}. Then $w_{\epsilon, s}>0$ in $\Omega$. Define $W_{\epsilon, s}=\mu ^{\frac{N-2}{4}} w_{\epsilon, s}(\mu x)$ with the scaling parameter $\mu$ chosen as $\mu=\left (\frac{\left\|w_{\epsilon, s}\right\|_{L^2(\Omega)}^2}{c}\right )^\frac{2}{N+2}$, we obtain that $\left\|W_{\epsilon, s}\right\|_{L^2(\Omega)}^2=c$. It is easy to verify that $\left\|W_{\epsilon, s}\right\|_{L^2(\Omega)}^2=c$.

    Let $\overline{W}_k=k^{\frac{N}{2}} W_{\epsilon, \widehat{s}}(k x),\ k \geq 1$ where $\widehat{s}$ will be determined below. Set
    $$
    \phi(k):=E(\overline{W}_k)=\frac{k^2}{2}\int_{\Omega}|\nabla W_{\epsilon, \hat{s} }|^2 d x+k^{N+2}\int_{\Omega}W_{\epsilon, \hat{s}}^2|\nabla W_{\epsilon, \hat{s}}|^2 d x-\frac{k^{(2^*-1)N}}{2\cdot 2^*}\int_{\Omega}|W_{\epsilon, \hat{s}}|^{2(2^*)} d x,\ \ k\ge 1. 
    $$
    Then
    \begin{align*}
        \phi'(k)=k\int_{\Omega}|\nabla W_{\epsilon, \hat{s} }|^2 d x+(N+2)k^{N+1}\int_{\Omega}W_{\epsilon, \hat{s} }^2|\nabla W_{\epsilon, \hat{s} }|^2 d x-\frac{(2^*-1)Nk^{(2^*-1)N-1}}{2\cdot 2^*} \int_{\Omega}|W_{\epsilon, \hat{s} }|^{2(2^*)} dx.
    \end{align*}
    By \eqref{5aj}-\eqref{5al}, we have 
    \begin{align*}
        &\int_{\Omega}|\nabla W_{\epsilon, \hat{s} }|^2 d x=\int_{\Omega}|\nabla u_c|^2 d x+o_\epsilon (1),\\
        &\int_{\Omega}|W_{\epsilon, \hat{s}}|^{2(2^*)}dx=\int_{\Omega}|u_c|^{2(2^*)}dx+\hat{s}^{2(2^*)}\mathcal{S}^{\frac{N}{2} }+o_\epsilon (1),\\
        & \int_{\Omega}|W_{\epsilon, \hat{s}}|^2 |\nabla W_{\epsilon, \hat{s} }|^2dx=\int_{\Omega}|u_c|^2 |\nabla u_c|^2dx+\frac{\hat{s}^4}{4}\mathcal{S}^{\frac{N}{2} } +o_\epsilon (1).
    \end{align*}
    We may choose $k$ large such that $\varphi'(k) < 0$ for all $k > 1$. Hence, $E(\overline{W}_{k}) \le E(W_{\varepsilon, \hat{s}})$. Furthermore, it is not difficult to see that $\varphi(k) \to -\infty$ as $k \to \infty$. Define $\gamma(t) := W_{\varepsilon, 2t\hat{s}}$ for $t \in [0, 1/2]$ and $\gamma(t) := \overline{W}_{2(t-1/2)k_0+1}$ for $t \in [1/2,1]$, 
    where $k_0$ is large enough such that $E\left(\overline{W}_{k_0+1}\right) < E(u_c)$ and $\overline{W}_{k_0+1} \notin \mathcal{G}$. Then $\gamma \in \Gamma$, where $\Gamma$ is defined by \eqref{5ca}. 

    We claim that 
    \begin{align}\label{5an}
    E\left(W_{\epsilon, s}\right)<E\left(u_c\right)+\frac{1}{2N}\mathcal{S}^{\frac{N}{2}}+o(1).    
    \end{align}
     If this claim holds, then $\sup _{t \in[0,1]} E(\gamma)<E \left (u_c\right)+\frac{1}{2N}\mathcal{S}^{\frac{N}{2}}$, implying that $m(c)<\nu_c+\frac{1}{2N}\mathcal{S}^{\frac{N}{2}}.$ In order to conclude, it is sufficient to prove \eqref{5an}. 
     
    Note that
    \begin{align*}
        E\left(W_{\epsilon, s}\right)= &E\left(w_{\epsilon, s}\right)+\frac{1}{2} \left(1-\mu^\frac{2-N}{2}\right)\int_{\Omega}\left|\nabla w_{\epsilon, s}\right|^2 \nonumber \\
        &= E(u_c) + \frac{1}{2} \left( \mu ^{\frac{2-N}{2}} - 1 \right) \int_{\Omega} |\nabla u_c|^2 dx + s\mu^{\frac{2-N}{2}} \int_{\Omega} \nabla u_c \nabla v_\epsilon dx \\
        &+ \frac{s^2}{2} \mu^{\frac{2-N}{2}} \int_{\Omega } |\nabla v_\epsilon|^2 dx + \frac{s^4}{4} \int_{\Omega} |\nabla v_\epsilon|^2 dx + s^2 \int_{\Omega} |\nabla (u_c v_\epsilon)|^2 dx \\
        &+ \frac{s^2}{2} \int_{\Omega} \nabla u_c^2 \nabla v_\epsilon^2 dx + s \int_{\Omega} \nabla u_c^2 \nabla (u_c v_\epsilon) dx+ s^3 \int_{\Omega} \nabla v_\epsilon^2 \nabla (u_c v_\epsilon) dx \\
        &+\frac{1}{2 \cdot 2^*}  \int_{\Omega} \left |u_c\right | ^{2 ( 2^*)} dx- \frac{1}{2 \cdot 2^*}  \int_{\Omega} \left |u_c+sv_\epsilon\right | ^{2 ( 2^*)} dx.
    \end{align*}
    Since $u_c$ satisfies \eqref{5aa}, one gets 
    \begin{align}\label{5ao}
        \int_{\Omega}\nabla u_c \nabla v_\epsilon dx+\int_{\Omega}\nabla u_c^2 \nabla (u_cv_\epsilon)dx=\int_{\Omega}|u_c|^{2(2^*)-2}u_cv_\epsilon dx+\lambda_c \int_{\Omega}u_cv_\epsilon dx.
    \end{align}
    Noticing that 
    $$
    \mu^\frac{N+2}{2}=\frac{\left \|u_c+sv_\epsilon\right \|_2^2 }{c}=1+\frac{2s}{c} \int_{\Omega} u_c v_\epsilon d x+\frac{s^2}{c} \int_{\Omega}\left|v_\epsilon\right|^2 d x.
    $$
    It follows from \eqref{5am} that
    \begin{align}\label{5ap}
        \mu^\frac{N-2}{2}-1=\mu^{\frac{N+2}{2}\frac{2-N}{N+2}}-1=\frac{s}{c}\frac{2(2-N)}{N+2}\int_{\Omega}u_cv_\epsilon dx-s^2O(\epsilon^\frac{N-2}{4}).
    \end{align}
    Using $(a+b)^r\ge a^r+b^r+rab^{r-1}$ when $r \ge 2$, we have 
    \begin{align}\label{5aq}
        &\frac{1}{2\cdot 2^*} \int_{\Omega}\left (|u_c|^{2(2^*)}-|u_c+s v_\epsilon|^{2(2^*)}\right ) dx \nonumber \\
        \le &-\frac{s^{2(2^*)}}{2\cdot 2^*}\int_{\Omega}|v_\epsilon|^{2(2^*)}dx-s^{2(2^*)-1}\int_{\Omega}u_cv_\epsilon^{2(2^*)-1}dx-\int_{\Omega}u_c^{2(2^*)-1}v_\epsilon dx.
    \end{align}
    From \eqref{5ao}-\eqref{5aq}, we get that
    \begin{align*}
        E(W_{\varepsilon,t}) \leq &\ E(u_c)+ s^2 \int_{\Omega} u_c^2 |\nabla v_\epsilon|^2 dx+ s^2 \int_{\Omega} |\nabla u_c|^2 v_\epsilon^2 dx  \\
        & + s^2 \int_{\Omega} \nabla u_c^2 \nabla v_\epsilon^2 dx + 2s^3 \int_{\Omega} v_\epsilon^2 \nabla u_c \nabla v_\epsilon dx+ 2s^3 \int_{\Omega} |\nabla v_\epsilon|^2 u_c v_\epsilon dx \\
        & + \lambda_c s \int_{\Omega} u_c v_\epsilon dx + \frac{1}{2} \left( \mu^{\frac{2-N}{2}} - 1 \right) \int_{\Omega} |\nabla u_c|^2 dx\\
        & + \left( \mu^{\frac{2-N}{2}} - 1 \right) s \int_{\Omega} \nabla u_c \nabla v_\epsilon dx + \frac{s^2}{2} \mu^{\frac{2-N}{2}} \int_{\Omega} |\nabla v_\epsilon|^2 dx\\
        & + \frac{s^4}{4} \int_{\Omega} |\nabla v_\epsilon^2|^2 dx - \frac{s^{2\cdot2^*}}{2 \cdot 2^*} \int_{\Omega} v_\epsilon^{2\cdot2^*} dx- s^{2\cdot2^*-1} \int_{\Omega} u_c v_\epsilon^{2\cdot2^*-1} dx        
\end{align*} 
Similar to \cite[Lemma 3.13]{chang2025}, as $\epsilon\to 0$, we have the following estimates: 
\begin{align}
    &\int_{\Omega} u^2_c |\nabla v_\epsilon|^2  dx = O ( \epsilon^{\frac{N-2}{4}} |\ln \epsilon|), \label{5ar}\\
    &\int_{\Omega} |\nabla u_c|^2 v_\epsilon^2  dx = O ( \epsilon^{\frac{N-2}{4}}),\label{5as}\\
    &\int_{\Omega} \nabla u^2_c \cdot \nabla v_\epsilon^2  dx = O ( \epsilon^{\frac{N-2}{4}} \sqrt{|\ln \epsilon|}),\label{5at} \\
    &\int_{\mathbb{R}^{N}} v_{\epsilon}^{2} \nabla u_c \nabla v_\epsilon :=\zeta_\epsilon (N)= 
    \begin{cases} 
    O(\epsilon^{\frac{N+2}{8}}) & \text{if } N > 4, \\
    O(\epsilon^{\frac{3}{4}} | \ln \epsilon |) & \text{if } N = 4, \\
    O(\epsilon^{\frac{3}{8}}) & \text{if } N = 3,
    \end{cases} \label{5au}\\
    &\int_{\mathbb{R}^{N}} |\nabla v_{\epsilon}|^{2} u_c v_{\epsilon}  dx= O( \epsilon^{\frac{N-2}{8}}), \label{5av} \\
    &\int_{\Omega} u_c v^{2 \cdot 2^* - 1}_{\epsilon}  dx = O\left( \epsilon^{\frac{N-2}{8}} \right). \label{5aw}
\end{align} 
In view of \eqref{5aj}-\eqref{5am} and \eqref{5ar}-\eqref{5aw}, we get
\begin{align*}
    E(W_{\varepsilon,t})\leq &E(u_c) + \frac{s^4}{4} \mathcal{S}^{\frac{N}{2}} -\frac{s^{2\cdot2^*}}{2 \cdot 2^*} \mathcal{S}^{\frac{N}{2}}+o_\epsilon(1)\leq E\left(u_c\right)+\frac{1}{2N}\mathcal{S}^{\frac{N}{2}}+o_\epsilon(1).
\end{align*}    
    We complete the proof.
\end{proof}

Now we are prepared to prove Theorem \ref{B}.

\begin{proof}[proof of Theorem \ref{B}]
    Firstly, we construct a Palais-Smale sequence at the energy level $m(c)$. By Lemma \ref{G}, we can take $\theta_n \rightarrow 1^{-}$and $v_n=v_{\theta_n}$ to solve \eqref{5ai}. Notice that $\lim_{\theta \to 1^{-}} m_{\theta} = m_{1}$. Taking $n$ large such that $\theta_n$ close to $1^{-}$ enough, using the fact that $v_{\theta_n} \in \mathcal{G}_{\theta_n}$, we obtain
    \begin{align*}
        2 m_1 \geq m_{\theta_n}=I_{\theta_n}\left(v_n\right)  >\frac{1}{N}\int_{\Omega}|\nabla v_n|^2 dx,
    \end{align*}
    yielding the $H_0^1(\Omega)$-boundedness of $\left\{v_n\right\}$. Then, we have
    $$
    (\left.I\right|_{S_c^{+}})^{\prime}\left(v_n\right)=(\left.I_{\theta_n}\right|_{S_c^{+}})^{\prime}\left(v_n\right)+o_n(1) \rightarrow 0 \text { as } n \rightarrow \infty .
    $$
    $$
    \lim _{n \rightarrow \infty} I\left(v_n\right)=\lim _{n \rightarrow \infty} I_{\theta_n}\left(v_n\right)=\lim _{n \rightarrow \infty} m_{\theta_n}=m_1 .
    $$
    Hence, $v_n$ is exactly the sequence we need. Up to a subsequence, we assume that
    \begin{align*}
        & v_n \rightharpoonup \tilde{v}_c \quad \text { weakly in } H_0^1(\Omega), \\
        & v_n \rightharpoonup \tilde{v}_c \quad \text { weakly in } L^{2^*}(\Omega), \\
        & v_n \rightarrow \tilde{v}_c \quad \text { strongly in } L^r(\Omega) \text { for } 2<r<2^*, \\
        & v_n \rightarrow \tilde{v}_c \quad \text { almost everywhere in } \Omega .
    \end{align*}
    It can be verified that $\tilde{v}_c \in S_c^{+}$ is a positive solution of \eqref{5aa}. Let $w_n=v_n-\tilde{v}_c$. Since $(\left.I\right|_{S_c^{+}})^{\prime}\left(v_n\right) \rightarrow 0$, there exists $\lambda_n$ such that $ I'(v_n)-\lambda_nf(v_n^+)f'(v_n^+) \to 0$. Let $\tilde{\lambda}_c$ be the Lagrange multiplier correspond to $\tilde{v}_c$. Similar to the proof to Theorem \ref{A}, we have
    $$
    \int_{\Omega}|\nabla w_n|^2 dx=\int_{\Omega}|f(w_n)|^{2(2^*)-2}f(w_n)f'(w_n)w_ndx+o_n(1).
    $$
    Hence, we assume that $\int_{\Omega}\left|\nabla w_n\right|^2 d x \rightarrow l \geq 0,\ \int_{\Omega}|f(w_n)|^{2(2^*)-2}f(w_n)f'(w_n)w_n\to l\ge 0$. 
    From the definition of $\mathcal{S}$ and Lemma \ref{C}, we deduce that
    $$
    \int_{\Omega}\left|\nabla w_n\right|^2 d x \geq \mathcal{S}\left(\int_{\Omega} |w_n|^{2^*} d x\right)^{\frac{2}{2^*}}\geq \mathcal{S}\left(\int_{\Omega} |f(w_n)|^{2^*} d x \right)^{\frac{2}{2^*}}\geq \mathcal{S}l^\frac{2}{2^*},
    $$
    implying $ l \geq \mathcal{S}l^\frac{2}{2^*}$. Assume, for the sake of contradiction, that $l>0$, then $ l \geq \mathcal{S}^{\frac{N}{2} }$, implying that
    $$
    I\left(w_n\right) \geq \frac{1}{N}\mathcal{S}^{\frac{N}{2} }+o_n(1).
    $$
    Further, using the Brézis-Lieb Lemma, we get
    $$
    I\left(v_n\right)=I\left(\tilde{v}_c\right)+I\left(w_n\right)+o_n(1) \geq \nu_c+\frac{1}{N}\mathcal{S}^{\frac{N}{2} }+o_n(1) .
    $$
    Then,
    $$
    \nu_c+\frac{1}{N}\mathcal{S}^{\frac{N}{2} } \leq m(c)<\nu_c+\frac{1}{2N}\mathcal{S}^{\frac{N}{2}}+o(1).
    $$
    leads to a contradiction. Therefore, $l=0$ and $v_n \rightarrow \tilde{v}_c$ strongly in $H_0^1(\Omega)$. Similar to Proposition \ref{J}, we get $\tilde{v}_c>0$ solves \eqref{5ad}. Hence, $\tilde{u}_c=f(\tilde{v}_c)$ is a normalized mountain pass solutions. 
    
\end{proof}

\bibliographystyle{unsrt}
\bibliography{ref}

\begin{thebibliography}{10}

\bibitem{wzq2004}
J.Q. Liu, Y.Q. Wang, and Z.Q. Wang.
\newblock Solutions for quasilinear {S}chr\"odinger equations via the {N}ehari method.
\newblock {\em Comm. Partial Differential Equations}, 29(5-6):879--901, 2004.

\bibitem{jean2010}
M.~Colin, L.~Jeanjean, and M.~Squassina.
\newblock Stability and instability results for standing waves of quasilinear {S}chr\"odinger equations.
\newblock {\em Nonlinearity}, 23(6):1353--1385, 2010.

\bibitem{wzq2002}
M.~Poppenberg, K.~Schmitt, and Z.Q. Wang.
\newblock On the existence of soliton solutions to quasilinear {S}chr\"odinger equations.
\newblock {\em Calc. Var. Partial Differential Equations}, 14(3):329--344, 2002.

\bibitem{wzq2003}
J.Q. Liu and Z.Q. Wang.
\newblock Soliton solutions for quasilinear {S}chr\"odinger equations. {I}.
\newblock {\em Proc. Amer. Math. Soc.}, 131(2):441--448, 2003.

\bibitem{Ruiz2010}
D.~Ruiz and G.~Siciliano.
\newblock Existence of ground states for a modified nonlinear {S}chr\"odinger equation.
\newblock {\em Nonlinearity}, 23(5):1221--1233, 2010.

\bibitem{jean2004}
M.~Colin and L.~Jeanjean.
\newblock Solutions for a quasilinear {S}chr\"odinger equation: a dual approach.
\newblock {\em Nonlinear Anal.}, 56(2):213--226, 2004.

\bibitem{wzq2003-2}
J.Q. Liu, Y.Q. Wang, and Z.Q. Wang.
\newblock Soliton solutions for quasilinear {S}chr\"odinger equations. {II}.
\newblock {\em J. Differential Equations}, 187(2):473--493, 2003.

\bibitem{wzq2014}
J.Q. Liu, X.Q. Liu, and Z.Q. Wang.
\newblock Multiple sign-changing solutions for quasilinear elliptic equations via perturbation method.
\newblock {\em Comm. Partial Differential Equations}, 39(12):2216--2239, 2014.

\bibitem{wzq2013}
X.Q. Liu, J.Q. Liu, and Z.Q. Wang.
\newblock Quasilinear elliptic equations via perturbation method.
\newblock {\em Proc. Amer. Math. Soc.}, 141(1):253--263, 2013.

\bibitem{wzq2023}
L.~Zhang, J.Q. Chen, and Z.Q. Wang.
\newblock Ground states for a quasilinear {S}chr\"odinger equation: mass critical and supercritical cases.
\newblock {\em Appl. Math. Lett.}, 145:Paper No. 108763, 7, 2023.

\bibitem{Zou2023}
H.W. Li and W.M. Zou.
\newblock Quasilinear {S}chr\"odinger equations: ground state and infinitely many normalized solutions.
\newblock {\em Pacific J. Math.}, 322(1):99--138, 2023.

\bibitem{Yang2025}
X.Y. Yang and F.K. Zhao.
\newblock Infinitely many normalized solutions for a quasilinear {S}chr\"odinger equation.
\newblock {\em J. Geom. Anal.}, 35(2):Paper No. 52, 32, 2025.

\bibitem{zxx2024}
T.~Deng, M.~Squassina, J.J. Zhang, and X.X. Zhong.
\newblock Normalized solutions of quasilinear {S}chr\"odinger equations with a general nonlinearity.
\newblock {\em Asymptot. Anal.}, 140(1-2):5--24, 2024.

\bibitem{chang2025}
Y.X. Li, M.J. Yang, and Chang X.J.
\newblock Normalized solutions for a sobolev critical quasilinear schr\"odinger equation.
\newblock {\em arXiv:2506.10870}, 2025.

\bibitem{Gyx2024}
F.S. Gao and Y.X. Guo.
\newblock Existence of normalized solutions for mass super-critical quasilinear {S}chr\"odinger equation with potentials.
\newblock {\em J. Geom. Anal.}, 34(11):Paper No. 329, 39, 2024.

\bibitem{Gyx2025}
F.S. Gao and Y.X. Guo.
\newblock Normalized solution for a quasilinear {S}chr\"odinger equation with potentials and general nonlinearities.
\newblock {\em Commun. Pure Appl. Anal.}, 24(4):507--534, 2025.

\bibitem{jean2015}
L.~Jeanjean, T.J. Luo, and Z.Q. Wang.
\newblock Multiple normalized solutions for quasi-linear {S}chr\"odinger equations.
\newblock {\em J. Differential Equations}, 259(8):3894--3928, 2015.

\bibitem{jean1997}
L.~Jeanjean.
\newblock Existence of solutions with prescribed norm for semilinear elliptic equations.
\newblock {\em Nonlinear Anal.}, 28(10):1633--1659, 1997.

\bibitem{Bartch2017}
T.~Bartsch and N.~Soave.
\newblock A natural constraint approach to normalized solutions of nonlinear {S}chr\"odinger equations and systems.
\newblock {\em J. Funct. Anal.}, 272(12):4998--5037, 2017.

\bibitem{Pierotti2017}
D.~Pierotti and G.~Verzini.
\newblock Normalized bound states for the nonlinear {S}chr\"odinger equation in bounded domains.
\newblock {\em Calc. Var. Partial Differential Equations}, 56(5):Paper No. 133, 27, 2017.

\bibitem{Noris2019}
B.~Noris, H.~Tavares, and G.~Verzini.
\newblock Normalized solutions for nonlinear {S}chr\"odinger systems on bounded domains.
\newblock {\em Nonlinearity}, 32(3):1044--1072, 2019.

\bibitem{Noris2014}
B.~Noris, H.~Tavares, and G.~Verzini.
\newblock Existence and orbital stability of the ground states with prescribed mass for the {$L^2$}-critical and supercritical {NLS} on bounded domains.
\newblock {\em Anal. PDE}, 7(8):1807--1838, 2014.

\bibitem{chang20252}
X.J. Chang, M.T. Liu, and D.K. Yan.
\newblock Positive normalized solutions of {S}chr\"{o}dinger equations with sobolev critical growth in bounded domains.
\newblock {\em arXiv:2505.07578}, 2025.

\bibitem{Pellacci2021}
B.~Pellacci, A.~Pistoia, G.~Vaira, and G.~Verzini.
\newblock Normalized concentrating solutions to nonlinear elliptic problems.
\newblock {\em J. Differential Equations}, 275:882--919, 2021.

\bibitem{Bartsch2024}
T.~Bartsch, S.J. Qi, and W.M. Zou.
\newblock Normalized solutions to {S}chr\"odinger equations with potential and inhomogeneous nonlinearities on large smooth domains.
\newblock {\em Math. Ann.}, 390(3):4813--4859, 2024.

\bibitem{Lyy2025}
Y.Y. Liu and L.G. Zhao.
\newblock Normalized solutions for {S}chr\"odinger equations with general nonlinearities on bounded domains.
\newblock {\em J. Geom. Anal.}, 35(2):Paper No. 54, 32, 2025.

\bibitem{song2024}
L.J. Song and W.M. Zou.
\newblock Two positive normalized solutions on star-shaped bounded domains to the {B}r\'ezis-{N}irenberg problem, {I}: Existence.
\newblock {\em arXiv:2404.11204}, 2024.

\bibitem{song2023}
L.J. Song.
\newblock Existence and orbital stability/instability of standing waves with prescribed mass for the {$L^2$}-supercritical {NLS} in bounded domains and exterior domains.
\newblock {\em Calc. Var. Partial Differential Equations}, 62(6):Paper No. 176, 29, 2023.

\bibitem{Do2009}
J.M. do~\'O and U.~Severo.
\newblock Quasilinear {S}chr\"odinger equations involving concave and convex nonlinearities.
\newblock {\em Commun. Pure Appl. Anal.}, 8(2):621--644, 2009.

\bibitem{Brezis1983}
H.~Br\'ezis and E.~Lieb.
\newblock A relation between pointwise convergence of functions and convergence of functionals.
\newblock {\em Proc. Amer. Math. Soc.}, 88(3):486--490, 1983.

\bibitem{jean1999}
L.~Jeanjean.
\newblock On the existence of bounded {P}alais-{S}male sequences and application to a {L}andesman-{L}azer-type problem set on {$\mathbb{R}^N$}.
\newblock {\em Proc. Roy. Soc. Edinburgh Sect. A}, 129(4):787--809, 1999.

\end{thebibliography}

\end{document}